\theoremstyle{definition}
\newtheorem{example}{Example}
\newtheorem{definition}{Definition}
\newtheorem{remark}{Remark}
\theoremstyle{plain}
\newtheorem{theorem}{Theorem}
\newtheorem{lemma}{Lemma}
\newtheorem{proposition}{Proposition}
\newtheorem{corollary}{Corollary}
\newmdtheoremenv{theo}{Theorem}
\title{\LARGE Global convergence of the gradient method for functions definable in o-minimal structures}
\begin{document}

\author{\large C\'edric Josz\thanks{\url{cj2638@columbia.edu}, IEOR, Columbia University, New York. Research supported by NSF EPCN grant 2023032 and ONR grant N00014-21-1-2282.}}
\date{}

\maketitle

\begin{center}
    \textbf{Abstract}
    \end{center}
    \vspace*{-3mm}
 \begin{adjustwidth}{0.2in}{0.2in}
~~~~We consider the gradient method with variable step size for minimizing functions that are definable in o-minimal structures on the real field and differentiable with locally Lipschitz gradients. We prove that global convergence holds if continuous gradient trajectories are bounded, with the minimum gradient norm vanishing at the rate $o(1/k)$ if the step sizes are greater than a positive constant. If additionally the gradient is continuously differentiable, all saddle points are strict, and the step sizes are constant, then convergence to a local minimum holds almost surely over any bounded set of initial points.
\end{adjustwidth} 
\vspace*{3mm}
\noindent{\bf Keywords:} Kurdyka-\L{}ojasiewicz inequality, Morse-Sard theorem, semi-algebraic geometry.

\section{Introduction}
\label{sec:Introduction}
The gradient method for minimizing a differentiable function $f: \mathbb{R}^n\rightarrow\mathbb{R}$ consists in choosing an initial point $x_0 \in \mathbb{R}^n$ and generating a sequence of iterates according to the update rule $x_{k+1} = x_k - \alpha_k \nabla f(x_k)$ for $k=0,1,2,\hdots$ where $\alpha_0,\alpha_1,\alpha_2,\hdots>0$ are called the step sizes. Many objective functions $f$ of interest nowadays are not coercive and their gradient is not Lipschitz continuous, including those arising in nonconvex statistical estimation problems. Thus none of the convergence theorems regarding the gradient method apply, to the best of our knowledge. Merely showing that the iterates are bounded has remained elusive: it actually appears as an assumption in many recent works on the gradient method \cite[Theorem 4.1]{absil2005convergence}, \cite[Theorem 3.2]{attouch2013convergence}, \cite[Proposition 14]{lee2016}, \cite[Theorem 3]{panageas2017}, and \cite[Assumption 7]{lee2019}. Even then, one needs to either choose the step sizes carefully or assume Lipschitz continuity of the gradient in order to obtain convergence results (see Section \ref{sec:Literature review}). The object of this paper is to take a first step towards filling this gap with Theorem \ref{thm:convergence}. 
We next give some notations.

Let $\mathbb{N} := \{0,1,2,\hdots\}$ and let $\|\cdot\|$ be the induced norm of an inner product $\langle \cdot, \cdot\rangle$ on $\mathbb{R}^n$. Throughout this paper, we fix an arbitrary o-minimal structure on the real field (see Definition \ref{def:o-minimal}), and say that sets or functions are definable if they are definable in this structure. For readers not familiar with o-minimal structures, note that seemingly all objective functions used in practice are definable in some o-minimal structure. Low-rank matrix recovery (for e.g., matrix factorization, matrix completion, and matrix sensing) and deep neural networks are no exception \cite[Section 6.2]{bolte2020conservative}.

\begin{theorem}
\label{thm:convergence}
Let $f:\mathbb{R}^n \rightarrow \mathbb{R}$ be a definable differentiable function whose gradient is locally Lipschitz. The following statements are equivalent:
\begin{enumerate}
    \item For all $x_0 \in \mathbb{R}^n$, there exist $\bar{\alpha}>0$ and $c>0$ such that for all $\alpha_0,\alpha_1,\alpha_2,\hdots \in (0,\bar{\alpha}]$, the sequence $x_0,x_1,x_2,\hdots \in \mathbb{R}^n$ defined by 
    \begin{equation}
    \label{eq:gradient_method}
        x_{k+1} = x_k - \alpha_k \nabla f(x_k), ~~~\forall k \in \mathbb{N},
    \end{equation}
    satisfies $\|x_k\|\leqslant c$ for all $k \in \mathbb{N}$.
    \item For all $0< T \leqslant \infty$ and any differentiable function $x:[0,T)\rightarrow\mathbb{R}^n$ such that
    \begin{equation*}
        x'(t) = -\nabla f(x(t)), ~~~ \forall t \in (0,T),
    \end{equation*}
    there exists $c>0$ such that $\|x(t)\| \leqslant c$ for all $t\in [0,T)$.
    \item For any bounded subset $X_0$ of $\mathbb{R}^n$, there exist $\bar{\alpha}>0$ and $c>0$ such that, for all $\alpha_0,\alpha_1,\alpha_2,\hdots \in (0,\bar{\alpha}]$ and $x_0 \in X_0$, the sequence $x_0,x_1,x_2,\hdots \in \mathbb{R}^n$ defined by \eqref{eq:gradient_method} obeys 
    \begin{equation*}
        \sum_{k=0}^{\infty} \|x_{k+1}-x_k\| \leqslant c.
    \end{equation*}
\end{enumerate}
\end{theorem}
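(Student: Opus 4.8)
The three statements will be shown equivalent by establishing the cycle $(3)\Rightarrow(1)\Rightarrow(2)\Rightarrow(3)$. The implication $(3)\Rightarrow(1)$ is immediate: applying $(3)$ with $X_0=\{x_0\}$ gives $\|x_k\|\leqslant\|x_0\|+\sum_{j<k}\|x_{j+1}-x_j\|\leqslant\|x_0\|+c$ for all $k$. For $(1)\Rightarrow(2)$ I would argue by contraposition. Suppose a gradient-flow trajectory $x:[0,T)\to\mathbb{R}^n$ is unbounded and set $x_0:=x(0)$; since $\nabla f$ is locally Lipschitz, $x(\cdot)$ is the unique such trajectory through $x_0$. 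Given $\bar\alpha,c>0$, choose $t^*<T$ with $\|x(t^*)\|\geqslant c+1$. The curve $x([0,t^*])$ is compact, $\nabla f$ is Lipschitz on a compact neighborhood of it, and $t\mapsto x'(t)=-\nabla f(x(t))$ is Lipschitz; hence the classical error estimate for the explicit Euler scheme shows that for every sufficiently small constant step size $\alpha\in(0,\bar\alpha]$ the iterates of \eqref{eq:gradient_method} from $x_0$ stay in that neighborhood and satisfy $\|x_{\lfloor t^*/\alpha\rfloor}-x(t^*)\|<1$, so $\|x_{\lfloor t^*/\alpha\rfloor}\|>c$, contradicting $(1)$. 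The same discretization comparison run in reverse — letting the step sizes go to $0$ and extracting a uniformly convergent subsequence of the interpolated discrete paths, whose limit solves the gradient flow — shows that boundedness of continuous trajectories forces boundedness of the discrete iterates, a fact reused below.

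The substance is $(2)\Rightarrow(3)$, which I would split into three steps after fixing a bounded $X_0$ and replacing it by its closure $\bar X_0$. \emph{Step 1: all continuous trajectories issued from $\bar X_0$ lie in a common ball $\bar B(0,\rho)$.} By the Kurdyka--\L ojasiewicz inequality a bounded gradient trajectory of a definable function has finite length and converges to a critical point. If the supremum norms of the trajectories were unbounded over $\bar X_0$, pick $y_j\to\bar y\in\bar X_0$ whose trajectories leave every ball, let $\bar x^*:=\lim_{t}\Phi_t(\bar y)$ with $\Phi$ the gradient flow, let $\rho_0$ bound the trajectory of $\bar y$, and let $\tau_j$ be the first exit time of $\Phi_\cdot(y_j)$ from $\bar B(0,\rho_0+1)$. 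If $(\tau_j)$ is bounded, continuous dependence on the initial condition over a compact time interval contradicts $\|\Phi_{\tau_j}(y_j)\|=\rho_0+1>\|\Phi_{\tau^*}(\bar y)\|$; if $\tau_j\to\infty$, then for $j$ large $\Phi_\cdot(y_j)$ enters a small ball about $\bar x^*$ with $f$-value in the window where the Kurdyka--\L ojasiewicz inequality at $\bar x^*$ applies, and the standard length estimate traps it there forever — either way $\sup_t\|\Phi_t(y_j)\|$ is finite, a contradiction. \emph{Step 2:} the same escape-time dichotomy, run with the discrete iterates (using the discrete-to-continuous comparison above in the bounded-exit-index case and the discrete Kurdyka--\L ojasiewicz descent estimate in the other), yields $\bar\alpha>0$ and $\rho'>0$ such that every sequence from $\bar X_0$ with step sizes in $(0,\bar\alpha]$ stays in $\bar B(0,\rho')$; shrinking $\bar\alpha$ below the reciprocal of the Lipschitz constant of $\nabla f$ on $\bar B(0,\rho')$ produces the sufficient-decrease inequality $f(x_{k+1})\leqslant f(x_k)-\frac{1}{2\alpha_k}\|x_{k+1}-x_k\|^2$.

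\emph{Step 3: the uniform bound on $\sum_k\|x_{k+1}-x_k\|$.} Here I would use that, $f$ being definable, the Morse--Sard theorem makes the set of critical values of $f$ in $\bar B(0,\rho')$ finite, say $v_1<\dots<v_m$, and that the uniformized Kurdyka--\L ojasiewicz property furnishes $\varepsilon,\eta>0$ and desingularizing functions $\psi_1,\dots,\psi_m$ valid simultaneously near the respective critical levels. Partition the indices by the value $f(x_k)$: when $f(x_k)$ lies within $\eta$ of some $v_i$ with $x_k$ within $\varepsilon$ of the corresponding critical set, combining the Kurdyka--\L ojasiewicz inequality, the identity $\|x_{k+1}-x_k\|=\alpha_k\|\nabla f(x_k)\|$, the sufficient-decrease inequality, and concavity of $\psi_i$ gives $\|x_{k+1}-x_k\|\leqslant 2\bigl(\psi_i(f(x_k)-v_i)-\psi_i(f(x_{k+1})-v_i)\bigr)$, which telescopes over the relevant block of indices to at most $2\psi_i(\eta)$; on the complementary index set $x_k$ stays a fixed distance from $\mathrm{crit}(f)\cap\bar B(0,\rho')$, hence $\|\nabla f(x_k)\|\geqslant\mu>0$, and the sufficient-decrease inequality gives $\frac{\mu}{2}\|x_{k+1}-x_k\|\leqslant f(x_k)-f(x_{k+1})$, whose sum is controlled by the total decrease of $f$ on $\bar B(0,\rho')$ plus $m$ windows of width $2\eta$. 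Summing yields a constant $c$ depending only on $\bar X_0$. The cancellation of $\alpha_k$ in the first estimate is exactly what makes the bound robust to vanishing step sizes. The delicate points — and the reason o-minimality, not just local Kurdyka--\L ojasiewicz, is needed — are producing the common ball in Steps 1--2 and making every constant in Step 3 independent of the initial point and of the step sizes, which is where finiteness of the critical values and the uniformized Kurdyka--\L ojasiewicz property enter; I expect the main obstacle to be the uniform boundedness of the discrete iterates (Step 2), since it must propagate boundedness from the continuous flow to the iterates and simultaneously survive arbitrarily small step sizes.
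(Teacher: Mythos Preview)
Your cycle, the implications $(3)\Rightarrow(1)$ and $(1)\Rightarrow(2)$, and the outline of Step~3 are all fine and match what the paper does. The real problem is the ``escape-time dichotomy'' you invoke in Steps~1 and~2, specifically in the branch where the exit time (or exit index) tends to infinity. You claim that once the trajectory enters a small ball about the limit point $\bar x^*$ of $\Phi_\cdot(\bar y)$ with $f$-value in the Kurdyka--\L ojasiewicz window, ``the standard length estimate traps it there forever''. This is false in general: the KL estimate at $\bar x^*$ only controls the trajectory while $f$ stays strictly above $f(\bar x^*)$. If $\bar x^*$ is a saddle, a nearby trajectory can cross the level $f(\bar x^*)$ and then head towards a different critical point, possibly far outside $\bar B(0,\rho_0+1)$. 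Nothing in your argument excludes this, so neither Step~1 nor Step~2 closes. You correctly flag Step~2 as the main obstacle, but the mechanism you propose for it---KL trapping after tracking---does not survive the passage below a critical value.

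The paper's proof avoids this by never insisting on trapping. Instead it proves a recursive inequality: the supremum of lengths over trajectories started in $X_0$ is bounded by a constant plus the supremum over a new set $X_1\subset B(C,\epsilon)\cap\{f\leqslant \max_C f-\gamma\}$ for some definite $\gamma>0$ coming from the KL inequality. The point is that escaping the ball around the critical set forces $f$ to drop below the top critical value by a fixed amount, so $X_1$ sees strictly fewer critical values than $X_0$; by the definable Morse--Sard theorem the recursion terminates. This is done first for continuous trajectories (the paper's Lemma~\ref{lemma:uniform_length}, proved via Arzel\`a--Ascoli on a maximizing sequence rather than your pointwise dichotomy) to get a uniformly bounded set $\overline{\Phi(\mathbb{R}_+,X_0)}$ on which all constants can be fixed, and then again for the discrete iterates in $(2)\Rightarrow(3)$. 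Your Step~3 is close in spirit to the paper's discrete length bound (Proposition~\ref{prop:length_discrete}), but the paper applies it inside the recursion rather than after a separate uniform-boundedness step; this is what lets the upper bound $\bar\alpha$ on step sizes be chosen once and for all, since the relevant compact set comes from the continuous flow, not from an a priori bound on the discrete iterates.
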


The bound on the length implies that the iterates converge to a critical point of $f$ (i.e., a point $x^* \in \mathbb{R}^n$ such that $\nabla f(x^*)=0$) if the step sizes are not summable. If in particular there exists $\underline{\alpha}>0$ such that $\alpha_0,\alpha_1,\alpha_2,\hdots \geqslant \underline{\alpha}$, then 
\begin{equation*}
    \min_{i=0,\hdots,k} \|\nabla f(x_i)\| ~\leqslant~ \frac{2\underline{\alpha}^{-1}}{k+2} \sum_{i=\lfloor k/2 \rfloor}^{\infty} \|x_{i+1}-x_i\|, ~~~ \forall k\in \mathbb{N},
\end{equation*}
where $\lfloor \cdot \rfloor$ stands for floor\footnote{Any nonnegative decreasing sequence $u_0,u_1,u_2,\hdots$, and in particular the minimum gradient norm, satisfies $(k/2+1)u_k \leqslant (k -\lfloor k/2 \rfloor +1)u_k \leqslant \sum_{i=\lfloor k/2\rfloor}^k u_i \leqslant \sum_{i=\lfloor k/2\rfloor}^\infty u_i$ for all $k\in \mathbb{N}$.}. The previously known global convergence rate is $o(1/\sqrt{k})$ for lower bounded differentiable functions with Lipschitz continuous gradients \cite[Equation (1.2.22)]{nesterov2018lectures} \cite[Lemma 1]{lee2019first}, which improves to $o(1/k)$ if additionally the function is convex and attains its infimum \cite[Equation (2)]{nesterov2012make} \cite[Theorem 3]{lee2019first}. Corollary \ref{cor:local_min} gives sufficient conditions for the critical point to be a local minimum.

\begin{corollary}
\label{cor:local_min}
Let $f:\mathbb{R}^n \rightarrow \mathbb{R}$ be a definable twice continuously differentiable function with bounded continuous gradient trajectories. If the Hessian of $f$ has a negative eigenvalue at all saddle points of $f$, then for any bounded subset $X_0$ of $\mathbb{R}^n$, there exists $\bar{\alpha}>0$ such that for all $\alpha \in (0,\bar{\alpha}]$ and for almost every $x_0 \in X_0$, the sequence $x_0,x_1,x_2,\hdots \in \mathbb{R}^n$ defined by $x_{k+1} = x_k - \alpha \nabla f(x_k)$ for all $k \in \mathbb{N}$ converges to a local minimum of $f$.
\end{corollary}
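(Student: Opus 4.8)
The plan is to feed the boundedness and trajectory-length estimates of Theorem~\ref{thm:convergence} into the classical fact that gradient descent almost surely escapes strict saddle points. Since $f$ has bounded continuous gradient trajectories, statement~2 of Theorem~\ref{thm:convergence} holds, hence so does statement~3; applied to the bounded set $X_0$ it provides $\bar\alpha_0>0$ and $c>0$ such that, for every constant step size $\alpha\in(0,\bar\alpha_0]$ and every $x_0\in X_0$, the iterates satisfy $\sum_k\|x_{k+1}-x_k\|\leqslant c$. Consequently $(x_k)$ is Cauchy and converges to some $x^*$, and since $\|\nabla f(x_k)\|=\alpha^{-1}\|x_{k+1}-x_k\|\to 0$ while $\nabla f$ is continuous, $\nabla f(x^*)=0$. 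Moreover every iterate stays in $\overline{B}(0,R)$ with $R:=c+\sup_{x_0\in X_0}\|x_0\|$, a radius depending only on $X_0$.

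Next I would replace $f$ by a surrogate with globally Lipschitz gradient, without changing the dynamics on $X_0$. Because $f$ is twice continuously differentiable, $\tilde f:=\psi f$ --- where $\psi$ is a smooth cutoff equal to $1$ on $\overline{B}(0,R+1)$ and supported in $B(0,R+2)$ --- is twice continuously differentiable with compactly supported, hence globally $L$-Lipschitz, gradient, and coincides with $f$ together with its first two derivatives on a neighbourhood of $\overline{B}(0,R)$. Put $\bar\alpha:=\min\{\bar\alpha_0,1/(2L)\}$. For $\alpha\in(0,\bar\alpha]$ the map $g_\alpha:=\mathrm{id}-\alpha\nabla\tilde f$ is a $C^1$ diffeomorphism of $\mathbb{R}^n$: it is injective because $\|x-y\|=\alpha\|\nabla\tilde f(x)-\nabla\tilde f(y)\|\leqslant\alpha L\|x-y\|$ forces $x=y$, it is a local diffeomorphism because $Dg_\alpha=I-\alpha\nabla^2\tilde f$ is everywhere positive definite, and it is surjective because $y+\alpha\nabla\tilde f(\cdot)$ is a contraction for each $y$. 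A straightforward induction shows that for $x_0\in X_0$ the orbit $(g_\alpha^k(x_0))$ stays in $\overline{B}(0,R)$ and there equals the gradient-descent sequence of $f$; in particular its limit $x^*$ is a fixed point of $g_\alpha$ lying in $\overline{B}(0,R)$.

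I would then invoke the strict-saddle-avoidance theorem for possibly non-isolated critical points (Panageas--Piliouras; Lee et al.): since $g_\alpha$ is a $C^1$ diffeomorphism, the set $W_\alpha$ of points whose $g_\alpha$-orbit converges to a critical point $x^*$ of $\tilde f$ with $\lambda_{\min}(\nabla^2\tilde f(x^*))<0$ is Lebesgue-null. The mechanism is the center-stable manifold theorem --- at such an $x^*$, $Dg_\alpha(x^*)$ has eigenvalue $1-\alpha\lambda_{\min}(\nabla^2\tilde f(x^*))>1$ since $\alpha L<1$, so its local center-stable manifold has dimension at most $n-1$ --- combined with a countable covering of the a priori non-discrete set of such $x^*$ and the fact that a diffeomorphism pulls back null sets to null sets. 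This is the technical crux; everything else is bookkeeping.

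Finally, take $x_0\in X_0\setminus W_\alpha$. Its limit $x^*$ lies in $\overline{B}(0,R)$, hence is a critical point of $f$ with $\nabla^2 f(x^*)=\nabla^2\tilde f(x^*)$, and $x_0\notin W_\alpha$ forces $\lambda_{\min}(\nabla^2 f(x^*))\geqslant 0$. A saddle point being a critical point that is not a local minimum, the hypothesis rules out $x^*$ being a saddle point, so $x^*$ is a local minimum of $f$. Since $X_0\cap W_\alpha$ is Lebesgue-null, almost every $x_0\in X_0$ generates a sequence converging to a local minimum of $f$, with $\bar\alpha$ depending only on $X_0$ as required. The main obstacle is the non-isolated strict-saddle step above; the preceding reductions are exactly what make the global Lipschitz-gradient hypothesis of the classical results unnecessary here, since the needed compactness is supplied by Theorem~\ref{thm:convergence}.
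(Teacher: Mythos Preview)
Your overall strategy matches the paper's: combine Theorem~\ref{thm:convergence} with the strict-saddle-avoidance result of Panageas--Piliouras via the center and stable manifolds theorem. Your cutoff construction to manufacture a global $C^1$ diffeomorphism is a clean way to place the dynamics in the framework of \cite[Theorem~3]{panageas2017}, and the mechanics up to and including $\lambda_{\min}(\nabla^2 f(x^*))\geqslant 0$ are correct.

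There is, however, a genuine gap in the last paragraph. You assert that ``a saddle point [is] a critical point that is not a local minimum'', but in this paper (Definition~\ref{def:local_optimality}) a saddle point is a critical point that is neither a local minimum \emph{nor a local maximum}. The hypothesis therefore says nothing about the Hessian at local maxima. Your argument yields $\lambda_{\min}(\nabla^2 f(x^*))\geqslant 0$; combined with the second-order necessary condition at a local maximum this only forces $\nabla^2 f(x^*)=0$, and such degenerate local maxima do occur (take $f(x)=-\|x\|^4$). At these points $Dg_\alpha(x^*)=I$ has no eigenvalue exceeding $1$, so the center-stable manifold mechanism does not exclude them, and ``not a saddle point'' does not imply ``local minimum''.

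The paper closes this gap with definability rather than with the Hessian. A local maximum that is not also a local minimum cannot lie in the interior of the critical set (otherwise $\nabla f$ vanishes on a neighbourhood and $f$ is locally constant there), so it lies on the boundary of a definable set, which has Lebesgue measure zero by \cite[(1.10) Corollary p.~68]{van1998tame}. Moreover, if $x_k\to x^*$ with $x^*$ a local maximum, then once $x_k$ enters a neighbourhood where $f\leqslant f(x^*)$, monotonicity of $(f(x_k))_k$ forces $f(x_k)=f(x^*)$, the descent inequality gives $\nabla f(x_k)=0$, and hence $x_k=x^*$ from some index on. Thus the set of initial points converging to such maxima is contained in $\bigcup_{N\in\mathbb{N}} g_\alpha^{-N}(M)$ with $M$ null, which is null since $g_\alpha$ is a diffeomorphism. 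Insert this argument after establishing $\lambda_{\min}(\nabla^2 f(x^*))\geqslant 0$ and the proof goes through.
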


Corollary \ref{cor:local_min} immediately follows from Theorem \ref{thm:convergence} and \cite[Theorem 3]{panageas2017} via the center and stable manifolds theorem \cite[Theorem III.7]{shub2013global}. In the o-minimal setting, no assumption on the Hessian is required at local maxima, unlike in existing work \cite{lee2016,lee2019,panageas2019}. Indeed, local maxima that are not local minima lie on the boundary of the set of critical points, which has measure zero by \cite[(1.10) Corollary p. 68]{van1998tame}. In the spirit of \cite{ge2015escaping,lee2016}, we use the shorthand strict saddle to denote a saddle point where the Hessian has a negative eigenvalue (originally referred to as linearly unstable by Pemantle \cite[Theorem 1]{pemantle1990nonconvergence}). We next illustrate Corollary \ref{cor:local_min} using matrix factorization.

\begin{example}
\label{eg:pca}
Let $M \in \mathbb{R}^{m\times n}$ and consider the function
\begin{equation*}
    \begin{array}{cccc}
    f : & \mathbb{R}^{m\times r}\times \mathbb{R}^{n\times r} & \rightarrow & \mathbb{R}\\
        & (X,Y) & \mapsto & \|XY^\top-M\|^2
    \end{array}
\end{equation*}
where $m,n,r\in \mathbb{N}^*:=\{1,2,\hdots\}$. In this example, $\|\cdot\|=\sqrt{
\langle 
\cdot,\cdot\rangle}$ is the Frobenius norm and $\|\cdot\|_2$ is the spectral norm. The function is not coercive and its gradient is not Lipschitz continuous. Thus none of the convergence theorems in the optimization literature apply, to the best of our knowledge. There only exist results tailored to low-rank matrix factorization \cite[Theorem 3.1]{du2018algorithmic} \cite[Theorem 1.1]{ye2021global}. They require initialization near the origin and $\text{rank}(M) = r$, and only guarantee convergence with high probability. Fortunately, the function is semi-algebraic, hence definable in the real field with constants, and its saddle points are strict \cite{baldi1989neural,valavi2020revisiting} \cite[Theorem 3.3]{valavi2020landscape}. It also has bounded continuous gradient trajectories. Indeed, let $X:[0,T)\rightarrow\mathbb{R}^{m\times r}$ and $Y:[0,T)\rightarrow\mathbb{R}^{n\times r}$ where $0<T\leqslant \infty$ be differentiable functions such that
\begin{equation*}
    \dot{X} = - 2(XY^\top-M)Y,~~~ \dot{Y} = -2(XY^\top-M)^\top X.
\end{equation*}
On the one hand,
\begin{equation*}
    \dot{\overline{X^\top X-Y^\top Y}} = \dot{X}^\top X + X^\top \dot{X} - \dot{Y}^\top Y - Y^\top \dot{Y} = 0
\end{equation*}
and thus $X^\top X-Y^\top Y$ is constant (property known as balance \cite{du2018algorithmic,arora2018optimization,arora2019implicit}). On the other hand,
\begin{align*}
   \|X\|_2^4 + \|Y\|_2^4 ~ \leqslant & ~ \|X^\top X\|^2 + \|Y^\top Y\|^2 \\
    = & ~ \|X^\top X-Y^\top Y\|^2 + 2\langle X^\top X,Y^\top Y\rangle \\
    = & ~ \|X^\top X-Y^\top Y\|^2 + 2\|XY^\top \|^2 \\
    \leqslant & ~ \|X^\top X-Y^\top Y\|^2 + 2(\|XY^\top -M\|+\|M\|)^2 \\
    \leqslant & ~ \|X(0)^\top X(0)-Y(0)^\top Y(0)\|^2 + \\ 
    & ~ 2(\|X(0)Y(0)^\top -M\|+\|M\|)^2.
\end{align*}
Hence $X$ and $Y$ are bounded. By Corollary \ref{cor:local_min}, the gradient method with constant step size initialized in any bounded set converges almost surely to a local minimum. In fact, since $f$ has no spurious local minima, as was discovered in 1989 \cite{baldi1989neural}, we have established convergence to a global minimum almost surely.
\end{example}

Boundedness of continuous subgradient trajectories, which holds for lower semicontinuous convex functions that admit a minimum \cite[Theorem 4]{bruck1975asymptotic}, is a property shared by several other applications. Indeed, it can be shown that linear neural networks \cite[Proposition 1]{chitour2022geometric} \cite[Theorem 3.2]{bah2022learning} \cite[Proposition 4.1]{joszli2022} and some nonlinear neural networks with the sigmoid activation function have bounded continuous gradient trajectories \cite[Proposition 4.2]{joszli2022} (see \cite{du2019gradient} for recent work using the ReLU activation). Theorem \ref{thm:convergence} implies convergence to a critical point, which may not be a local minimum since not all saddle points are strict \cite{kawaguchi2016}. Matrix sensing also has bounded continuous gradient trajectories \cite[Proposition 4.4]{joszli2022} under the restricted isometry property (RIP) due to Candès and Tao \cite{candes2005decoding}. In addition, all critical points are either global minima or strict saddles \cite[Theorem III.1]{li2020global} provided that the RIP constant is less that 1/5. Corollary \ref{cor:local_min} again implies convergence to a global minimum almost surely.

In order to prove Theorem \ref{thm:convergence}, we propose the following result regarding real-valued multivariate functions.
\begin{lemma}
\label{lemma:uniform_length}
If a locally Lipschitz definable function has bounded continuous subgradient trajectories, then continuous subgradient trajectories initialized in a bounded set have uniformly bounded lengths.
\end{lemma}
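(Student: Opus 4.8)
The plan is to combine a compactness argument, confining all trajectories that start in a bounded set to a common compact set, with a uniform length estimate obtained by integrating the reciprocal of a ``desingularized speed'' along the trajectory. Throughout, a subgradient trajectory is an absolutely continuous curve $\gamma$ with $\dot\gamma(t)\in-\partial f(\gamma(t))$ for a.e.\ $t$, where $\partial f$ is the Clarke subdifferential. I will freely use the chain rule for tame functions, namely $\tfrac{d}{dt}f(\gamma(t))=\langle g,\dot\gamma(t)\rangle$ for every $g\in\partial f(\gamma(t))$ at a.e.\ $t$, which yields the energy identity $\tfrac{d}{dt}f(\gamma(t))=-\|\dot\gamma(t)\|^2$ and in particular that $f$ is nonincreasing along $\gamma$; the nonsmooth Kurdyka--\L ojasiewicz inequality for definable locally Lipschitz functions, which implies that every \emph{bounded} subgradient trajectory converges to a critical point and has finite length; and the definable Morse--Sard theorem, which implies that $f$ has only finitely many (Clarke) critical values.

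\emph{Step 1: a common compact set.} Let $X_0$ be bounded and suppose, for contradiction, that the images of subgradient trajectories starting in $\overline{X_0}$ are not contained in any ball. Then there are subgradient trajectories $\gamma_j$ with $\gamma_j(0)\in\overline{X_0}$ and $\sup_t\|\gamma_j(t)\|\to\infty$, and by compactness of $\overline{X_0}$ we may assume $\gamma_j(0)\to a$. Each $\gamma_j$ is bounded, hence of finite length $L_j$, and since its total displacement is unbounded, $L_j\to\infty$. Reparametrizing $\gamma_j$ by arc length gives a $1$-Lipschitz curve $\tilde\gamma_j:[0,L_j]\to\mathbb R^n$ with $\tilde\gamma_j(0)=\gamma_j(0)$, along which $f$ is nonincreasing; by the Arzel\`a--Ascoli theorem and a diagonal extraction, $\tilde\gamma_j$ converges uniformly on compacts to a $1$-Lipschitz curve $\tilde\gamma:[0,+\infty)\to\mathbb R^n$ with $\tilde\gamma(0)=a$. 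Using that the Clarke subdifferential is outer semicontinuous and convex valued, the set of subgradient trajectories is closed under uniform convergence with uniformly bounded velocities; arguing up to the first time $\tilde\gamma$ meets a critical point, where, by lower semicontinuity of $x\mapsto\mathrm{dist}(0,\partial f(x))$ and compactness, velocities stay bounded below and the time reparametrization is controlled, one finds that on this initial portion $\tilde\gamma$ is a time-reparametrization of a subgradient trajectory starting at $a$, and that $\tilde\gamma$ is either unbounded or of infinite length. Either alternative contradicts the hypothesis, because a bounded subgradient trajectory has finite length. Hence all trajectories starting in $\overline{X_0}$ remain in a fixed compact set $K$.

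\emph{Step 2: a uniform length estimate.} Put $m:=\min_K f$ and $M:=\max_{\overline{X_0}}f$, and define the definable function $\phi_0(u):=\inf\{\mathrm{dist}(0,\partial f(x)):x\in K,\ f(x)=u\}$ for $u\in[m,M]$; then $\phi_0\geqslant 0$, and $\phi_0(u)>0$ unless $u$ is one of the finitely many critical values of $f$ in $[m,M]$. Let $\gamma$ start in $X_0$, so $\gamma$ lies in $K$ and $h(t):=f(\gamma(t))$ is absolutely continuous, nonincreasing, with $h(0)\leqslant M$ and $\lim_t h(t)\geqslant m$. From the energy identity together with $\|\dot\gamma(t)\|\geqslant\mathrm{dist}(0,\partial f(\gamma(t)))\geqslant\phi_0(h(t))$ we obtain, for a.e.\ $t$ with $h(t)$ not a critical value, $\|\dot\gamma(t)\|\leqslant -h'(t)/\phi_0(h(t))$; and the set of $t$ with $h(t)$ a critical value is a finite union of intervals on which $h$ is constant, hence $\dot\gamma=0$ a.e.\ there, so these times contribute nothing to the length. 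Integrating and changing variables in the monotone absolutely continuous function $h$ gives
\[
  \mathrm{length}(\gamma)\ =\ \int_{\{t:\,\phi_0(h(t))>0\}}\|\dot\gamma(t)\|\,dt\ \leqslant\ \int_{\{t:\,\phi_0(h(t))>0\}}\frac{-h'(t)}{\phi_0(h(t))}\,dt\ \leqslant\ \int_m^M\frac{du}{\phi_0(u)}.
\]
To see that this last integral, which depends only on $X_0$, is finite: away from a neighborhood of the critical values, $\phi_0$ is bounded below by compactness and lower semicontinuity of $x\mapsto\mathrm{dist}(0,\partial f(x))$; near a critical value $w$, the nonsmooth \L ojasiewicz gradient inequality applied on a neighborhood of the compact set of critical points of $f$ in $K$ with value $w$, combined again with the bound away from critical points, yields $c>0$ and $\theta\in[0,1)$ with $\phi_0(u)\geqslant c\,|u-w|^{\theta}$ for $u$ near $w$, and $\int|u-w|^{-\theta}\,du$ converges since $\theta<1$. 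Summing the finitely many contributions, $\int_m^M du/\phi_0(u)<\infty$, and this finite constant bounds the lengths of all subgradient trajectories starting in $X_0$.

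The main obstacle is Step 1: upgrading ``each trajectory is bounded'' to ``trajectories from a bounded set lie in a common compact set.'' The delicate point is that subgradient trajectories need not depend continuously on their starting point and may reach critical points in finite length, so the limiting curve $\tilde\gamma$ could a priori ``cross'' a critical point; the argument must exclude that $\tilde\gamma$ escapes to infinity, or accumulates infinite length, while doing so, and it is here that the hypothesis of boundedness of \emph{all} subgradient trajectories is used in full. Once Step 1 is established, Step 2 is a fairly mechanical consequence of the energy identity, the \L ojasiewicz inequality with exponent strictly less than one, and the one-dimensional change of variables.
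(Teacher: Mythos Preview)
Your Step~2 has a technical slip: the \L{}ojasiewicz gradient inequality with an exponent $\theta\in[0,1)$ is only available in polynomially bounded o-minimal structures, whereas the paper works in an arbitrary o-minimal structure. You should instead use the Kurdyka--\L{}ojasiewicz inequality with a concave desingularizing function $\psi$, which gives $\phi_0(u)\geqslant 1/\psi'(|u-w|)$ near a critical value $w$ and hence $\int du/\phi_0(u)\leqslant 2\psi(\epsilon)<\infty$. This is easily repaired.

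The real gap is Step~1, and you have correctly identified it as the crux but have not actually closed it. The limit $\tilde\gamma$ of the arc-length reparametrizations is only $1$-Lipschitz, not unit-speed; Arzel\`a--Ascoli gives no lower bound on $\|\tilde\gamma'\|$, so $\tilde\gamma$ can collapse onto a critical point and sit there---its length on $[0,\infty)$ can be finite and its image bounded, with no contradiction. Concretely, the trajectories $\gamma_j$ may spend arbitrarily long arc length wandering in a shrinking neighborhood of a saddle before escaping to infinity; after passing to the limit, the escape occurs at arc length $s_j\to\infty$ and is invisible in $\tilde\gamma$. Your sentence ``one finds that \ldots\ $\tilde\gamma$ is either unbounded or of infinite length'' is thus unjustified, and your proposed remedy (``arguing up to the first time $\tilde\gamma$ meets a critical point'') only shows that the initial segment is a reparametrized subgradient trajectory, which is bounded and of finite length by hypothesis---again no contradiction. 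Past that first critical point, there is no reason for $\tilde\gamma$ to be a subgradient trajectory at all.

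The paper avoids this trap by \emph{not} separating the argument into ``common compact set'' followed by ``uniform bound.'' Instead it takes a length-maximizing sequence in the original time parametrization, uses a finite-horizon estimate to get equicontinuity, passes to a limiting subgradient trajectory $x(\cdot)$ that (being bounded by hypothesis) converges to some critical point $x^*$, and then---this is the key idea---uses the inverse of the length estimate of Proposition~\ref{prop:length_continuous} to show that any trajectory in the maximizing sequence that exits $B(x^*,\epsilon)$ must have dropped below $f(x^*)$ by a fixed amount. This yields a recursion $\sigma(X_0)\leqslant \text{(explicit bound)}+\sigma(X_1)$ where $X_1$ sits strictly below the critical value $f(x^*)$; by Morse--Sard there are finitely many critical values, so the recursion terminates. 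In effect, the paper replaces your compactness argument by an induction on critical values driven by the length-versus-function-decrease inequality.
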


Figuratively, Lemma \ref{lemma:uniform_length} says that lava flowing from the vent of a volcano always stays in a bounded region.  A key idea in the proofs of Lemma \ref{lemma:uniform_length} and Theorem \ref{thm:convergence} is to consider the supremum of the lengths of subgradient trajectories over all possible initial points in a bounded set, as well as over all possible step sizes in the discrete case (see \eqref{eq:sup_trajectory} and \eqref{eq:sup_discrete_X_0}). We then construct a sequence of bounded sets that iteratively exclude critical values of the objective function until none remain. In order to do so, we rely on the Kurdyka-\L{}ojasiewicz inequality \cite[Theorem 1]{kurdyka1998gradients}, the Morse-Sard theorem \cite{morse1939behavior,sard1942measure}, and the monotonicity theorem \cite[(1.2) p. 43]{van1998tame} \cite{pillay1986definable}.

We in fact propose a uniform Kurdyka-\L{}ojasiewicz inequality (see Proposition \ref{prop:KL}) better suited for our purposes. It should not to be confused with the uniformized KL property \cite[Lemma 6]{bolte2014proximal} which extends a pointwise version of the Kurdyka-\L{}ojasiewicz inequality to compact sets. The Kurdyka-\L{}ojasiewicz inequality guarantees the existence of a desingularizing function around each critical value of a locally Lipschitz definable function. By extending the inequality to hold across multiple critical values, we refine an important result of Kurdyka \cite[Theorem 2]{kurdyka1998gradients} 
regarding the length of gradient trajectories (see Proposition \ref{prop:length_continuous}). Our proof is also new because we integrate the Kurdyka-\L{}ojasiewicz inequality along a subgradient trajectory (see \eqref{eq:god_a}) and we use Jensen's inequality \cite{jensen1906fonctions}.

Kurdyka's aforementioned result states that continuous gradient trajectories of a continuously differentiable definable function that lie in a bounded set have uniformly bounded lengths, generalizing \L{}ojasiewicz' discovery for real analytic functions \cite{lojasiewicz1984trajectoires} \cite[Theor\`eme 5]{lojasiewicz1963propriete}. We make explicit the link between the length and the function variation using the desinguralizing function and the number of critical values, in both continuous (Proposition \ref{prop:length_continuous}) and discrete cases (Proposition \ref{prop:length_discrete}). 

This link, refined in the continuous case, new in the discrete case, enables us to determine how subgradient trajectories behave in the vicinity of saddle points. Either they stay above the corresponding critical value, or they admit a uniform decrease below it (see Example \ref{eg:decrease}). By the definable Morse-Sard theorem \cite[Corollary 9]{bolte2007clarke}, lower semicontinuous definable functions have finitely many critical values. In order to navigate from one to the next in the proofs of Lemma \ref{lemma:uniform_length} and Theorem \ref{thm:convergence}, we reason by induction in a way that is reminiscent of Bellman's principle of optimality \cite[p. 83]{bellman1957}.


This paper is organized as follows. Section \ref{sec:Literature review} contains a literature review. Section \ref{sec:Subgradient trajectories} contains some definitions and basic properties of subgradient trajectories. Section \ref{sec:Uniform Kurdyka-Lojasiewicz inequality} contains the uniform Kurdyka-\L{}ojasiewicz inequality and some consequences. Section \ref{sec:Proof of Lemma lemma:uniform_length} contains the proof of Lemma \ref{lemma:uniform_length}. We then prove Theorem \ref{thm:convergence} in a cyclic manner in Section \ref{sec:Proof of Theorem thm:convergence}.

\section{Literature review}
\label{sec:Literature review}
The gradient method was proposed by Cauchy in 1847 \cite{cauchy1847methode} for minimizing a nonnegative continuous multivariate function. In Lemaréchal's words \cite{lemarechal2012cauchy}, ``\textit{convergence is just sloppily mentioned}'' in \cite{cauchy1847methode} and ``\textit{we are now aware that some form of
uniformity is required from the objective’s continuity}''. According to Theorem \ref{thm:convergence}, no such assumption is required in the o-minimal setting. 

The first convergence proof is due to Temple \cite{temple1939general} in 1939. It deals with the case where the objective function is quadratic with a positive definite Hessian. He showed that the gradient converges to zero if the step size is chosen so as to minimize the function along the negative gradient (known as exact line search). Curry \cite{curry1944method} generalized this result to any function with continuous partial derivatives by taking the step size to correspond to the first stationary point along the negative gradient (known as Curry step). This stationary point is assumed to exist. It is also assumed that the iterates admit a converging subsequence. Interestingly, both assumptions hold if the objective is coercive. 

Subsequently, Kantorovich \cite{kantorovich1948functional} proposed his eponymous inequality which established a linear convergence rate for quadratic objective functions with positive definite Hessians. Akaike \cite{akaike1959successive} then argued that for ill-conditioned Hessians, the rate is generally close to its worst possible value, as observed by Forsythe and Motkzin \cite{forsythe1951asymptoticproperties}. Goldstein \cite{goldstein1965steepest}, Armijo \cite{armijo1966minimization}, and Wolfe \cite{wolfe1969convergence} later proposed inexact line search methods for differentiable functions that guarantee a linear decrease along the negative gradient. All their convergence theorems require some form of uniformity from the objective's continuity. These can be proven using Zoutendijk's condition \cite{zoutendijk1970nonlinear} and assuming Lipschitz continuity of the gradient on a sublevel set (see \cite[Theorem 3.2]{nocedal2006numerical} and \cite[Propositions 6.10-6.12]{gilbert2021fragments}). The latter assumption is also used in the convex setting \cite{polyak1987introduction,nesterov2018lectures}.

The assumption of Lipschitz continuity of the gradient seems to be omnipresent in the more general context of first-order methods. For the proximal gradient method, Bauschke et al. \cite{bauschke2017descent} recently proposed to exchange it with a Lipschitz-like/convexity condition when minimizing convex composite functions. This idea was generalized to nonconvex composite functions for a Bregman-based proximal gradient method \cite{bolte2018first}, where the new assumption is named smooth adaptable. The gradient method with constant step size is a special case of this algorithm, but in that case being smooth adaptable means having a Lipschitz continuous gradient. Note that the convergence theorem of the Bregman-based proximal gradient method \cite[Theorem 4.1]{bolte2018first} also requires the iterates to be bounded. A notion of stopping time \cite[Equation (3.1)]{patel2022gradient} was recently proposed in order to analyze the gradient method with diminishing step sizes without assuming Lipschitz continuity of the gradient. Several results are derived regarding the stopping time \cite[Theorems 3.3 and 4.1]{patel2022gradient}, but the results devoid of it again require the iterates to be bounded.

Once the iterates are assumed to be bounded, then a lot can be said about the gradient method. Absil et al. \cite[Theorem 4.1]{absil2005convergence} showed that bounded iterates of the gradient method with Wolfe's line search converge to a critical point if the objective function is analytic \cite[Theorem 4.1]{absil2005convergence}.  The proof relies on the \L{}ojasiewicz gradient inequality \cite[Proposition 1 p. 67]{law1965ensembles} and implies that the iterates have finite length, albeit not uniformly as in Theorem \ref{thm:convergence}. Note that without uniformity, Corollary \ref{cor:local_min} cannot be deduced. The exponent in the \L{}ojasiewicz gradient inequality also informs the convergence rate \cite{polyak1963gradient,li2018calculus}. A result of Attouch et al. \cite[Theorem 3.2]{attouch2013convergence} implies that bounded iterates of the gradient method with sufficiently small constant step sizes converge to a critical point if the objective function is differentiable with a Lipschitz continuous gradient and it satisfies the Kurdyka-\L{}ojasiewicz inequality at every point. If additionally the function is twice continuously differentiable and its Hessian has a negative eigenvalue at all saddle points and local maxima, then the limiting critical point must be a local minimum for almost every initial point, as shown by Lee et al. \cite[Proposition 14]{lee2016} and Panageas and Piliouras \cite[Theorem 3]{panageas2017} using the center and stable manifolds theorem. This convergence result was generalized to other first-order methods in \cite[Assumption 7]{lee2019}. Although not stated explicitly, the boundedness assumption is implicit in \cite[Theorem 3]{panageas2017} since the forward invariant domain needs to be bounded in order to guarantee convergence to a local minimum, as stated in the title of that paper.

In the main result of this paper, we establish an equivalence between boundedness of the iterates of the gradient method and boundedness of their continuous counterpart. The closest result in the literature seems to be \cite[Theorem 39]{bolte2010characterizations}. It considers a coercive differentiable convex objective function which satisfies the Kurdyka-\L{}ojasiewicz inequality and which has a Lipschitz continuous gradient. It shows that piecewise gradient iterates have uniformly bounded lengths if and only if piecewise gradient curves have uniformly bounded lengths, under the assumption that the iterates satisfy a strong descent assumption \cite[Equation (53)]{bolte2010characterizations} \cite[Definiton 3.1]{absil2005convergence}. The notions of piecewise gradient iterates and curves \cite[Definiton 15]{bolte2010characterizations} do not seem to inform the behavior of discrete and continuous gradient trajectories considered in this paper (see Definition \ref{def:trajectory}).

In order to establish global convergence of the gradient method, we propose Lemma \ref{lemma:uniform_length} as mentioned in the introduction. The length of subgradient trajectories has been of significant interest in the last decades. \L{}ojasiewicz \cite{lojasiewicz1984trajectoires} proved in the early eighties that bounded continuous gradient trajectories of analytic functions have finite length. This result has since been generalized to continuously differentiable definable functions by Kurdyka \cite[Theorem 2]{kurdyka1998gradients}, and later extended to nonsmooth settings by Bolte 
\textit{et al.} \cite[Theorem 3.1]{bolte2007lojasiewicz} \cite[Theorem 14]{bolte2007clarke}. Manselli and Pucci \cite[IX]{manselli1991maximum} \cite[Corollary 2.4]{daniilidis2015rectifiability} showed that continuous subgradient trajectories of multivariate convex functions that admit a minimum have finite length. This is false in infinite dimension due to a counterexample of Baillon \cite{baillon1978exemple}. These results relating to convexity solved an open problem posed by Br\'ezis \cite[Problem 13 p. 167]{brezis1973ope}. Discrete gradient trajectories of convex differentiable functions with Lipschitz continuous gradients also have finite length if the step size is constant and sufficiently small \cite[Corollary 3.9]{bohm2020ubiquitous} \cite[Theorem 15]{gupta2021path}.

It is conjectured that continuous subgradient trajectories of locally Lipschitz definable functions are nonoscillatory \cite[Conjecture F]{kurdyka1998gradients}, that is, the intersection of their orbit and any definable set has finitely many connected components. This would immediately imply Thom's gradient conjecture, which was shown to be true for analytic functions \cite{kurdyka2000proof}, and more generally for continuously differentiable functions definable in polynomially bounded o-minimal structures \cite{kurdyka43quasi}. In fact, it was shown that the radial projection of the difference between the trajectory and its limit has finite length.

Finally, let us discuss two relevant works \cite{d2021bounding,gupta2021path} published in 2021. The former bounds the lengths of continuous gradient trajectories that lie in a bounded set for families of definable functions and for polynomial functions, in which case the bound is explicit. The latter bounds the lengths of continuous and discrete gradient trajectories of differentiable functions with Lipschitz continuous gradients under the assumption of linear convergence to the set of global minimizers.

\section{Subgradient trajectories}
\label{sec:Subgradient trajectories}

We begin by stating some standard definitions. Let $B(a,r)$ and $S(a,r)$ respectively denote the closed ball and the sphere of center $a\in \mathbb{R}^n$ and radius $r \geqslant 0$.

\begin{definition}
\label{def:lipschitz}
A function $f:\mathbb{R}^n\rightarrow\mathbb{R}^m$ is locally Lipschitz if for all $a \in \mathbb{R}^n$, there exist $r>0$ and $L>0$ such that 
\begin{equation*}
        \forall x,y \in B(a,r), ~~~ \|f(x)-f(y)\| \leqslant L \|x-y\|.
    \end{equation*}
\end{definition}

\begin{definition}
    \label{def:Clarke}
    \cite[Chapter 2]{clarke1990}
    Let $f:\mathbb{R}^n \rightarrow \mathbb{R}$ be a locally Lipschitz function. The Clarke subdifferential is the set-valued mapping $\partial f:\mathbb{R}^n\rightrightarrows\mathbb{R}^n$ defined for all $x \in \mathbb{R}^n$ by
    $\partial f(x) := \{ s \in \mathbb{R}^n : f^\circ(x,h) \geqslant \langle s , h \rangle, ~ \forall h\in \mathbb{R}^n \}$ where
\begin{equation*}
    f^\circ(x,h) := \limsup_{\tiny\begin{array}{c} y\rightarrow x \\
    t \searrow 0
    \end{array}
    } \frac{f(y+th)-f(y)}{t}.
\end{equation*}
\end{definition}

We say that $x \in \mathbb{R}^n$ is critical if $0 \in \partial f(x)$. If $f$ is continuously differentiable in a neighborhood of $x \in \mathbb{R}^n$, then $\partial f(x) = \{\nabla f(x)\}$ \cite[2.2.4 Proposition p. 33]{clarke1990}.

\begin{definition}
\label{def:local_optimality}
A point $x^* \in \mathbb{R}^n$ is a local minimum (respectively global minimum) of a function $f:\mathbb{R}^n\rightarrow\mathbb{R}$ if there exists $\epsilon>0$ such that $f(x^*) \leqslant f(x)$ for all $x \in B(x^*,\epsilon)$ (respectively for all $x\in \mathbb{R}^n$). A local minimum $x^* \in \mathbb{R}^n$ is spurious if $f(x^*) > \inf\{ f(x) : x \in \mathbb{R}^n\}$. A point  $x^* \in \mathbb{R}^n$ is a saddle point if it is critical and it is neither a local minimum of $f$ nor $-f$.
\end{definition}

In order to define continuous subgradient trajectories, we recall the notion of absolute continuity.

\begin{definition}
    \label{def:absolute_continuity} 
    \cite[Definition 1 p. 12]{aubin1984differential}
    Given some real numbers $a\leqslant b$, a function $x:[a,b]\rightarrow\mathbb{R}^n$ is absolutely continuous if for all $\epsilon>0$, there exists $\delta>0$ such that, for any finite collection of disjoint subintervals $[a_1,b_1],\hdots,[a_m,b_m]$ of $[a,b]$ such that $\sum_{i=1}^m b_i-a_i \leqslant \delta$, we have $\sum_{i=1}^m \|x(b_i) - x(a_i)\| \leqslant \epsilon$.
\end{definition}

By virtue of \cite[Theorem 20.8]{nielsen1997introduction}, $x:[a,b]\rightarrow\mathbb{R}^n$ is absolutely continuous if and only if it is differentiable almost everywhere on $(a,b)$, its derivative $x'(\cdot)$ is Lebesgue integrable, and $x(t) - x(a) = \int_a^t x'(\tau)d\tau$ for all $t\in [a,b]$. Given a noncompact interval $I$ of $\mathbb{R}$, $x:I\rightarrow \mathbb{R}^n$ is absolutely continuous if it is absolutely continuous on all compact subintervals of $I$. Given an interval $I$ of $\mathbb{R}$, let $\mathcal{A}(I,\mathbb{R}^n)$ be the set of absolutely continuous functions defined from $I$ to $\mathbb{R}^n$.

\begin{definition}
\label{def:trajectory}
Let $f:\mathbb{R}^n \rightarrow \mathbb{R}$ be a locally Lipschitz function. Let $(S,\leqslant)$ be the set of absolutely continuous functions $x:[0,T)\rightarrow\mathbb{R}^n$ where $0<T\leqslant \infty$ such that  
\begin{equation*}
    x'(t) \in -\partial f(x(t)), ~~~\mathrm{for~a.e.}~t\in (0,T),
\end{equation*}
equipped with the partial order $\leqslant$ defined by $x_1:[0,T_1)\rightarrow\mathbb{R}^n$ is less than or equal to $x_2:[0,T_2)\rightarrow\mathbb{R}^n$ if and only if $T_1\leqslant T_2$ and $x_1(t) = x_2(t)$ for all $t\in [0,T_1)$. 
We refer to maximal elements of $(S,\leqslant)$ as continuous subgradient trajectories.
\end{definition}

We say that a continuous subgradient trajectory is globally defined if it is of the form $x:[0,T)\rightarrow\mathbb{R}^n$ where $T=\infty$. We refer to discrete subgradient trajectories as sequences $(x_k)_{k\in \mathbb{N}}$ in $\mathbb{R}^n$ such that $x_{k+1} \in x_k -\alpha_k \partial f(x_k)$ for all $k\in \mathbb{N}$ for some $\alpha_0,\alpha_1,\alpha_2,\hdots>0$. When $f$ is continuously differentiable, we refer to subgradient trajectories as  gradient trajectories.

\begin{definition}
\label{def:bounded_trajectory}
A continuous subgradient trajectory $x: [0,T) \rightarrow \mathbb{R}^n$ where $0< T\leqslant \infty$ is bounded if there exists $c>0$ such that $\|x(t)\| \leqslant c$ for all $t\in [0,T)$.
\end{definition}

In the rest of the section, we list basic properties of continuous subgradient trajectories that will be needed later. For other treatments of subgradient trajectories, see \cite[Chapter 17]{attouch2014variational} and \cite{garrigos2015descent}.

\begin{proposition}
\label{prop:existence_trajectory}
Locally Lipschitz functions have at least one continuous subgradient trajectory for every initial point.
\end{proposition}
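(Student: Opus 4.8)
The goal is to show that for any locally Lipschitz $f:\mathbb{R}^n\rightarrow\mathbb{R}$ and any $x_0\in\mathbb{R}^n$, there exists at least one maximal element of $(S,\leqslant)$ whose domain starts at $0$ with $x(0)=x_0$. The plan is to proceed in two stages: first produce \emph{some} absolutely continuous solution of the differential inclusion $x'(t)\in-\partial f(x(t))$ on a short interval $[0,\tau)$ with $x(0)=x_0$, and then invoke Zorn's lemma to extend it to a maximal one.

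For the first stage, I would recall the standard properties of the Clarke subdifferential of a locally Lipschitz function: $\partial f$ is nonempty, convex, compact-valued, locally bounded, and upper semicontinuous (\cite[Chapter 2]{clarke1990}). Hence $x\mapsto-\partial f(x)$ is an upper semicontinuous set-valued map with nonempty convex compact values, and locally bounded. By the classical existence theorem for differential inclusions with such right-hand sides (e.g.\ \cite[Chapter 2, Theorem 3]{aubin1984differential} or the Filippov/Aubin–Cellina theory in \cite[Chapter 17]{attouch2014variational}), for every initial condition $x_0$ there is $\tau>0$ and an absolutely continuous $x:[0,\tau)\rightarrow\mathbb{R}^n$ with $x(0)=x_0$ and $x'(t)\in-\partial f(x(t))$ for a.e.\ $t$. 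Local boundedness of $\partial f$ near $x_0$ gives a uniform Lipschitz constant for $x$ on a small interval, which is what guarantees $\tau>0$ can be taken strictly positive. This element lies in $(S,\leqslant)$, so $(S,\leqslant)$ is nonempty.

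For the second stage, I would apply Zorn's lemma to $(S,\leqslant)$. The order $\leqslant$ is defined so that $x_1\leqslant x_2$ means $x_2$ restricts to $x_1$ on the common (shorter) domain $[0,T_1)$. Given a chain $\{x_i:[0,T_i)\rightarrow\mathbb{R}^n\}_{i\in I}$, set $T^*=\sup_i T_i$ and define $x^*$ on $[0,T^*)$ by $x^*(t)=x_i(t)$ for any $i$ with $t<T_i$; this is well defined by the chain condition. On any compact $[0,s]\subset[0,T^*)$, $x^*$ agrees with some $x_i$ (pick $T_i>s$), hence is absolutely continuous there and satisfies the inclusion a.e.; therefore $x^*$ is an upper bound for the chain in $(S,\leqslant)$. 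By Zorn's lemma, $(S,\leqslant)$ has a maximal element, i.e.\ a continuous subgradient trajectory, with the prescribed initial point.

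The main obstacle is the first stage: correctly quoting an existence theorem for differential inclusions whose hypotheses match the Clarke subdifferential exactly (nonempty convex compact values, upper semicontinuity, local boundedness) and being careful that the local Lipschitz constant of $f$ translates into a genuine interval of existence $[0,\tau)$ with $\tau>0$. The Zorn's lemma extension is routine once the order and the ``restriction'' semantics of $\leqslant$ are unwound; the only point to check is that the union of a chain of absolutely continuous functions satisfying the inclusion is again such a function on its (possibly open) limiting domain, which follows because absolute continuity and the a.e.\ inclusion are both conditions verified on compact subintervals.
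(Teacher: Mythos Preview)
Your proposal is correct and follows essentially the same two-stage approach as the paper: local existence via the upper semicontinuity/convexity/compactness properties of $-\partial f$ together with the existence theorem for differential inclusions in \cite{aubin1984differential}, followed by Zorn's lemma to obtain a maximal extension. One small imprecision: you apply Zorn's lemma to all of $(S,\leqslant)$ and then assert the resulting maximal element has initial value $x_0$, which does not follow directly; the paper avoids this by working from the outset in the subset $S_{x_0}:=\{x\in S:x(0)=x_0\}$, which is itself closed under the chain-upper-bound construction (your upper bound $x^*$ automatically satisfies $x^*(0)=x_0$), so Zorn applies there and the maximal element in $S_{x_0}$ is easily seen to be maximal in $S$ as well.
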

\begin{proof}
Let $f:\mathbb{R}^n \rightarrow \mathbb{R}$ be a locally Lipschitz function and let $x_0\in\mathbb{R}^n$. Since $f$ is locally Lipschitz, the set-valued function $-\partial f$ is upper semicontinuous \cite[2.1.5 Proposition (d) p. 29]{clarke1990} with nonempty, compact and convex values \cite[2.1.2 Proposition (a) p. 27]{clarke1990}. By virtue of \cite[Theorem 3 p. 98]{aubin1984differential}, there exist $\epsilon>0$ and an absolutely continuous function $x:[0,\epsilon)\rightarrow\mathbb{R}^n$ such that  
\begin{equation*}
    x'(t) \in -\partial f(x(t)), ~\mathrm{for~a.e.}~ t\in (0,\epsilon),~~~ x(0) = x_0.
\end{equation*}
Let $S_{x_0}$ be the set of functions in $S$ from Definition \ref{def:trajectory} such that $x(0) = x_0$. Then $S_{x_0} \neq \emptyset$. Let $P$ be a nonempty totally ordered subset of $S_{x_0}$. Let $0<T \leqslant\infty$ be the supremum of the endpoints of the domains of all functions in $P$. Consider the function $x:[0,T)\rightarrow\mathbb{R}^n$ such that for all $\bar{x}:[0,\overline{T})\rightarrow \mathbb{R}^n$ in $P$ we have $x(t) := \bar{x}(t)$ for all $t\in [0,\overline{T})$. It is well defined and it is an upper bound of $P$ in $S_{x_0}$. By virtue of Zorn's lemma \cite[Corollary 2.5 p. 884]{lang2012algebra}, $S_{x_0}$ contains a maximal element. \end{proof}

\begin{proposition}
\label{prop:bounded_global}
Bounded continuous subgradient trajectories are globally defined.
\end{proposition}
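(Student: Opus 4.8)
The plan is to argue by contradiction: suppose $x:[0,T)\rightarrow\mathbb{R}^n$ is a bounded continuous subgradient trajectory with $T<\infty$, and show that it can be extended past $T$, contradicting maximality. First I would use boundedness together with the fact that $x'(t)\in -\partial f(x(t))$ almost everywhere: since $x$ takes values in a compact set $K=B(0,c)$ and $\partial f$ has compact values and is upper semicontinuous (hence locally bounded, by \cite[2.1.2 and 2.1.5 Proposition p. 27--29]{clarke1990}), there is a uniform bound $M:=\sup_{y\in K}\sup_{s\in\partial f(y)}\|s\|<\infty$ on $\|x'(t)\|$ for a.e. $t$. Via the integral representation of absolutely continuous functions (\cite[Theorem 20.8]{nielsen1997introduction}), this makes $x$ Lipschitz on $[0,T)$ with constant $M$, and in particular uniformly continuous, so the limit $x_T:=\lim_{t\to T^-}x(t)$ exists (Cauchy criterion). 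Extending $x$ to $[0,T]$ by setting $x(T):=x_T$ keeps it absolutely continuous and still a solution of the differential inclusion on $(0,T)$.

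Next I would restart the trajectory at the new endpoint: by Proposition \ref{prop:existence_trajectory} (or directly by \cite[Theorem 3 p. 98]{aubin1984differential}, invoked as in that proof), there exist $\epsilon>0$ and an absolutely continuous $y:[0,\epsilon)\rightarrow\mathbb{R}^n$ with $y(0)=x_T$ and $y'(t)\in-\partial f(y(t))$ for a.e. $t\in(0,\epsilon)$. Concatenating, I define $\tilde{x}:[0,T+\epsilon)\rightarrow\mathbb{R}^n$ by $\tilde{x}(t)=x(t)$ for $t\in[0,T]$ and $\tilde{x}(t)=y(t-T)$ for $t\in[T,T+\epsilon)$. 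One checks that $\tilde{x}$ is absolutely continuous on each compact subinterval (absolute continuity is a local property and the two pieces agree at $t=T$), and that $\tilde{x}'(t)\in-\partial f(\tilde{x}(t))$ for a.e. $t\in(0,T+\epsilon)$ — the only point needing a remark is $t=T$ itself, which has measure zero. Hence $\tilde{x}\in S$ and $x<\tilde{x}$ strictly, contradicting that $x$ is a maximal element of $(S,\leqslant)$.

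The one place requiring care — and what I expect to be the main obstacle — is the first step: establishing that $\lim_{t\to T^-}x(t)$ exists. This is where boundedness is genuinely used, and it hinges on upgrading ``bounded'' to ``Lipschitz near $T$'' via the uniform bound on the subdifferential over the compact set containing the orbit. Once the limit exists, the extension argument is routine. (Note the finite-length bound of Lemma \ref{lemma:uniform_length} would also give existence of the limit, but it is not needed here and would be circular in the paper's logical order; the crude uniform bound $M$ on $\|x'\|$ suffices.)
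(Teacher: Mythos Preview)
Your proposal is correct and follows essentially the same route as the paper: assume $T<\infty$, use the uniform bound on $\partial f$ over the compact set $B(0,c)$ to control $\|x'\|$ almost everywhere, extend $x$ to $[0,T]$, then invoke Proposition~\ref{prop:existence_trajectory} to push past $T$ and contradict maximality. The only cosmetic difference is that the paper obtains the limit $x(T)$ via Lebesgue's dominated convergence theorem applied to $\int_0^{t_k}x'(t)\,dt$, whereas you use the (equivalent and slightly more direct) Lipschitz/Cauchy argument.
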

\begin{proof}
Let $x:[0,T)\rightarrow\mathbb{R}^n$ be a bounded continuous subgradient trajectory of $f$ where $0<T \leqslant \infty$. We reason by contradiction and assume that $T<\infty$. By Definition \ref{def:bounded_trajectory}, there exists $c>0$ such that $\|x(t)\| \leqslant c$ for all $t\in [0,T)$. Consider a sequence $t_0,t_1,t_2,\hdots \in [0,T)$ converging to $T$. For all $k\in \mathbb{N}$, let $x_k:[0,T]\rightarrow\mathbb{R}^n$ be defined by $x_k(t) := x(t)$ for all $t\in [0,t_k]$ and $x_k(t) = 0$ for all $t\in (t_k,T]$. By construction, $x'_k$ converges pointwise almost everywhere to $x'$ on $(0,T)$. Let $L>0$ be a Lipschitz constant of $f$ on $B(0,c)$. By \cite[2.1.2 Proposition (a) p. 27]{clarke1990}, for all $x\in B(0,c)$ and $s\in \partial f(x)$, we have $\|s\| \leqslant L$. For almost every $t\in (0,T)$, we thus have $\|x_k'(t)\| \leqslant \|x'(t)\| \leqslant L$. By Lebesgue's dominated convergence theorem, $\int_0^{t_k} x'(t)dt = \int_0^T x'_k(t)dt \rightarrow \int_0^T x'(t)dt$. Since $x(\cdot)$ is absolutely continuous on $[0,t_k]$, we have 
$x(t_k) - x(0) = \int_0^{t_k} x'(t)dt$. Thus $x(\cdot)$ can be extended to an absolutely continuous function on $[0,T]$ where 
$x(T) := x(0) + \int_0^T x'(t)dt$. According to Proposition \ref{prop:existence_trajectory}, $x(\cdot)$ can further be extended to an absolutely continuous function on $[0,T+\epsilon)$ for some $\epsilon>0$ while satisfying the differential inclusion. This contradicts the maximality of $x(\cdot)$. 
\end{proof}

Proposition \ref{prop:equivalence_bounded} shows that the second statement in Theorem \ref{thm:convergence} is equivalent to requiring that all the continuous gradient trajectories are bounded, as stated in the abstract and Corollary \ref{cor:local_min}.

\begin{proposition}
\label{prop:equivalence_bounded}
Let $f:\mathbb{R}^n \rightarrow \mathbb{R}$ be a locally Lipschitz function and $x_0\in \mathbb{R}^n$. The following statements are equivalent:
\begin{enumerate}
    \item For all $0< T \leqslant \infty$ and any absolutely continuous function $x:[0,T)\rightarrow\mathbb{R}^n$ such that
    \begin{equation}
    \label{eq:[0,T)}
        x'(t) \in -\partial f(x(t)), ~~~ \mathrm{for~a.e.}~ t \in (0,T), ~~~ x(0) = x_0,
    \end{equation}
    there exists $c>0$ such that $\|x(t)\| \leqslant c$ for all $t\in [0,T)$.
    \item All the continuous subgradient trajectories of $f$ initialized at $x_0$ are bounded.
\end{enumerate}
\end{proposition}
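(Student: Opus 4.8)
The plan is to deduce both implications almost directly from Definition \ref{def:trajectory}, reusing the Zorn's lemma machinery already developed in the proof of Proposition \ref{prop:existence_trajectory}.

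For $(1)\Rightarrow(2)$, I would simply unwind Definition \ref{def:trajectory}: a continuous subgradient trajectory of $f$ initialized at $x_0$ is a maximal element $x:[0,T)\rightarrow\mathbb{R}^n$ of $(S,\leqslant)$ with $x(0)=x_0$, hence an absolutely continuous function satisfying \eqref{eq:[0,T)}. Statement (1) then immediately furnishes $c>0$ with $\|x(t)\|\leqslant c$ for all $t\in[0,T)$, which is exactly boundedness in the sense of Definition \ref{def:bounded_trajectory}. This direction requires no real work.

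For $(2)\Rightarrow(1)$, let $x:[0,T)\rightarrow\mathbb{R}^n$ be absolutely continuous and satisfy \eqref{eq:[0,T)}. Then $x$ lies in the set $S_{x_0}$ introduced in the proof of Proposition \ref{prop:existence_trajectory}. I would apply Zorn's lemma to the subset $\{y\in S_{x_0}:x\leqslant y\}$: it is nonempty since it contains $x$, and the gluing construction from the proof of Proposition \ref{prop:existence_trajectory} shows that every nonempty totally ordered subset has an upper bound that still dominates $x$ (it agrees with $x$ on $[0,T)$ because each member of the chain does), hence lies in the subset. Thus the subset has a maximal element $\bar{x}:[0,\bar{T})\rightarrow\mathbb{R}^n$. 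A one-line check shows $\bar{x}$ is maximal in all of $(S,\leqslant)$: any $z\in S$ with $\bar{x}\leqslant z$ satisfies $z(0)=\bar{x}(0)=x_0$ and $x\leqslant z$, so $z$ belongs to the subset and therefore $z=\bar{x}$. Hence $\bar{x}$ is a continuous subgradient trajectory of $f$ initialized at $x_0$, and by (2) there is $c>0$ with $\|\bar{x}(t)\|\leqslant c$ for all $t\in[0,\bar{T})$. Since $x\leqslant\bar{x}$ we have $T\leqslant\bar{T}$ and $x(t)=\bar{x}(t)$ for all $t\in[0,T)$, so $\|x(t)\|\leqslant c$ on $[0,T)$, proving (1).

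The only delicate point — and the step I would expect to scrutinize — is the Zorn's lemma extension in $(2)\Rightarrow(1)$, specifically verifying that a maximal element of the upper set $\{y\in S_{x_0}:x\leqslant y\}$ is genuinely maximal in $(S,\leqslant)$, so that it qualifies as a continuous subgradient trajectory in the precise sense of Definition \ref{def:trajectory}; everything else is routine bookkeeping with the partial order $\leqslant$.
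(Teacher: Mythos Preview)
Your proof is correct. The direction $(1)\Rightarrow(2)$ is identical to the paper's. For $(2)\Rightarrow(1)$ you take a slightly different route: the paper invokes Proposition~\ref{prop:bounded_global} to argue that every continuous subgradient trajectory from $x_0$ is globally defined, hence $x$ extends to a solution on $\mathbb{R}_+$ (which is automatically maximal by domain), and then applies (2) to that extension. You instead extend $x$ directly to a maximal element of $(S,\leqslant)$ via Zorn's lemma on the upper set $\{y\in S_{x_0}:x\leqslant y\}$, verify maximality in $(S,\leqslant)$, and apply (2) to it. Your route is marginally more self-contained since it does not rely on Proposition~\ref{prop:bounded_global}; the paper's route has the small advantage of making the global definedness explicit (the extension lives on all of $\mathbb{R}_+$), which is sometimes useful downstream. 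Both arguments ultimately rest on the same Zorn-type extension already set up in Proposition~\ref{prop:existence_trajectory}, so the difference is one of packaging rather than substance.
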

\begin{proof}
($1\Longrightarrow 2$) Let $x:[0,T)\rightarrow\mathbb{R}^n$ be a continuous subgradient trajectory of $f$ where $x(0)=x_0$ and $0<T\leqslant \infty$. Since it is absolutely continuous and satisfies \eqref{eq:[0,T)}, there exists $c>0$ such that $\|x(t)\| \leqslant c$ for all $t\in [0,T)$. Thus the subgradient trajectory is bounded by Definition \ref{def:bounded_trajectory}. ($2\Longrightarrow 1$) Let $x:[0,T)\rightarrow\mathbb{R}^n$ be an absolutely continuous function where $x(0) = x_0$ and $0<T\leqslant \infty$ such that \eqref{eq:[0,T)} holds. Since the continuous subgradient trajectories initialized at $x_0$ are bounded, by Proposition \ref{prop:bounded_global} they are globally defined. Thus $x(\cdot)$ can be extended to an absolutely continuous function on $\mathbb{R}_+ := [0,\infty)$ while satisfying the differential inclusion. Again because the continuous subgradient trajectories are bounded, by Definition \ref{def:bounded_trajectory} there exists $c>0$ such that $\|x(t)\| \leqslant c$ for all $t\in \mathbb{R}_+$. 
\end{proof}

\begin{remark}
It may be tempting to only require $T=\infty$ in first statement of Proposition \ref{prop:equivalence_bounded} and the second statement of Theorem \ref{thm:convergence}, but a counterexample is given by $f(x)=-x^4/4$. Indeed, the only differentiable function $x:[0,T)\rightarrow\mathbb{R}$ such that \eqref{eq:[0,T)} holds with $T=\infty$ is the constant function equal to zero (when $x_0=0$), which is trivially bounded. The other gradient trajectories are neither bounded nor globally defined (they are of the form $x:[0,x_0^{-2}/2) \rightarrow \mathbb{R}$ with $x(t) := \mathrm{sign}(x_0)(x_0^{-2} -2t)^{-1/2}$ when $x_0\neq 0$).
\end{remark}

Proposition \ref{prop:tracking} guarantees that discrete trajectories track continuous trajectories up to a certain time in a uniform way with respect to the initial point. A key idea for proving Lemma \ref{lemma:uniform_length} and Theorem \ref{thm:convergence} alluded to in the introduction is also used to prove Proposition \ref{prop:tracking} (see \eqref{eq:sup_trajectory_T_smooth}).

\begin{proposition}
\label{prop:tracking}
Let $f:\mathbb{R}^n \rightarrow \mathbb{R}$ be a lower bounded differentiable function with a locally Lipschitz gradient. Let $X_0$ be a bounded subset of $\mathbb{R}^n$ and let $T>0$. For all $\epsilon>0$, there exists $\bar{\alpha}>0$ such that for all $\alpha_0,\alpha_1,\alpha_2,\hdots \in (0,\bar{\alpha}]$ and for all sequence $x_0,x_1,x_2,\hdots \in \mathbb{R}^n$ such that $x_0 \in X_0$ and $x_{k+1} = x_k - \alpha_k \nabla f(x_k)$ for all $k \in \mathbb{N}$, the continuous gradient trajectory $x:\mathbb{R}_+\rightarrow \mathbb{R}^n$ of $f$ initialized at $x_0$ satisfies
\begin{equation}
\label{eq:tracking}
   \forall k\in \mathbb{N}^*,~~~ \alpha_0+\cdots+\alpha_{k-1} \leqslant T ~~~\Longrightarrow~~~ \|x_k - x(\alpha_0+\cdots+\alpha_{k-1})\| \leqslant \epsilon.
\end{equation}
\end{proposition}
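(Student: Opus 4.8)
The plan is to make sense of the statement and then run the classical ``the explicit Euler scheme tracks the flow'' error analysis, being careful that every constant depends only on $f$, $X_0$, $T$ and $\epsilon$.

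\emph{Step 0: global existence and a uniform bound for the continuous trajectory.} Since $\nabla f$ is locally Lipschitz, for each $x_0$ the Cauchy problem $x'=-\nabla f(x)$, $x(0)=x_0$, has a unique maximal solution $x:[0,T^*)\rightarrow\mathbb{R}^n$. Along it $\tfrac{d}{dt}f(x(t))=-\|\nabla f(x(t))\|^2\leqslant 0$, so
\begin{equation*}
\int_0^\tau \|\nabla f(x(t))\|^2\,dt = f(x_0)-f(x(\tau)) \leqslant f(x_0)-\inf f \quad\text{for all }\tau<T^*,
\end{equation*}
where lower boundedness of $f$ is used. If $T^*<\infty$, Cauchy--Schwarz gives $\|x(\tau_2)-x(\tau_1)\|\leqslant \sqrt{\tau_2-\tau_1}\,\sqrt{f(x_0)-\inf f}\rightarrow 0$ as $\tau_1,\tau_2\rightarrow T^*$, so $x(\tau)$ converges and the solution extends past $T^*$ by local existence, a contradiction; hence $T^*=\infty$ and $x:\mathbb{R}_+\rightarrow\mathbb{R}^n$ is well defined. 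For $t\in[0,T]$, $\|x(t)\|\leqslant \|x_0\|+\int_0^t\|\nabla f(x(\tau))\|\,d\tau\leqslant \|x_0\|+\sqrt{T}\,\sqrt{f(x_0)-\inf f}$; since $f$ is continuous it is bounded above on the compact set $\overline{X_0}$, so there is $R>0$, depending only on $f$, $X_0$, $T$, with $\|x(t)\|\leqslant R$ for all $t\in[0,T]$ and all $x_0\in X_0$. Put $K:=B(0,R+1)$; since $\nabla f$ is locally Lipschitz it is Lipschitz on the compact convex set $K$ with some constant $L\geqslant 1$ and satisfies $\|\nabla f\|\leqslant G$ on $K$ for some $G\geqslant 1$, and we may assume $\epsilon\leqslant 1$.

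\emph{Step 1: one-step estimate and discrete Gr\"onwall.} Fix step sizes $\alpha_0,\alpha_1,\dots\in(0,\bar\alpha]$ and iterates $x_0,x_1,\dots$ as in the statement, and set $t_0:=0$, $t_k:=\alpha_0+\cdots+\alpha_{k-1}$, $e_k:=\|x_k-x(t_k)\|$. Whenever $x_k\in K$ and $t_{k+1}\leqslant T$ (so $x(s)\in B(0,R)\subseteq K$ for every $s\in[t_k,t_{k+1}]$), subtracting $x(t_{k+1})=x(t_k)-\int_{t_k}^{t_{k+1}}\nabla f(x(s))\,ds$ from $x_{k+1}=x_k-\alpha_k\nabla f(x_k)$ and using $\|x_k-x(s)\|\leqslant e_k+G\alpha_k$ together with the Lipschitz bound on $K$ gives
\begin{equation*}
e_{k+1}\leqslant e_k+\int_{t_k}^{t_{k+1}}L(e_k+G\alpha_k)\,ds = (1+L\alpha_k)e_k+LG\alpha_k^2.
\end{equation*}
Since $e_0=0$, unrolling this recursion and bounding each product $\prod_{i=j+1}^{k-1}(1+L\alpha_i)$ by $\exp(Lt_k)\leqslant e^{LT}$ yields $e_k\leqslant LGe^{LT}\sum_{j<k}\alpha_j^2\leqslant LGe^{LT}\bar\alpha\, t_k\leqslant LGe^{LT}\bar\alpha\,T$.

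\emph{Step 2: induction and conclusion.} Choose $\bar\alpha:=\epsilon\,(LGe^{LT}T)^{-1}$, which depends only on $f$, $X_0$, $T$, $\epsilon$. I claim $x_k\in K$ and $e_k\leqslant\epsilon$ for every $k$ with $t_k\leqslant T$, by induction on $k$: the base case is $e_0=0$ and $x_0\in X_0\subseteq B(0,R)\subseteq K$; if $t_{k+1}\leqslant T$ then $t_j\leqslant T$ and $x_j\in K$ for all $j\leqslant k$ by the inductive hypothesis, so the estimate of Step 1 is valid for indices $0,\dots,k$ and the Gr\"onwall bound gives $e_{k+1}\leqslant LGe^{LT}\bar\alpha T=\epsilon$, whence $x_{k+1}\in B(x(t_{k+1}),\epsilon)\subseteq B(0,R+1)=K$. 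This is exactly \eqref{eq:tracking} with the announced uniformity. I expect the \emph{only} genuine subtlety to be the circularity in Step 2 --- the local Lipschitz constant of $\nabla f$ may only be invoked once the iterates are known to lie in $K$, yet it is that Lipschitz bound which confines them to $K$ --- which is broken by the induction on $k$, made possible by the fact that the discrete Gr\"onwall bound of Step 1 is uniform in $k$ (for $t_k\leqslant T$), in $x_0\in X_0$, and in the step sizes. Step 0, where lower boundedness enters, is needed merely to give the statement content, since without it the continuous gradient trajectory need not be globally defined (cf.\ the remark on $f(x)=-x^4/4$ above).
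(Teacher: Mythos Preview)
Your proof is correct and follows essentially the same approach as the paper: a uniform a priori bound on the continuous trajectories over $[0,T]$ via Cauchy--Schwarz and lower boundedness, a one-step Euler error estimate yielding a recursion $e_{k+1}\leqslant(1+c\alpha_k)e_k+d\alpha_k^2$, a discrete Gr\"onwall unrolling, and an induction to break the circularity between the Lipschitz region and the location of the iterates. The only cosmetic differences are that the paper derives the one-step bound by inserting the Euler step from $x(t_k)$ and using a Taylor remainder (getting constants $M,L$ from $\nabla f$ and $f$), whereas you compare directly with the integral form of the flow (getting constants $L,G$ from $\nabla f$ and its sup), and the paper bounds the product via AM--GM before exponentiating while you use $1+x\leqslant e^x$ directly.
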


\begin{proof}
Without loss of generality, we may assume that $X_0 \neq \emptyset$. Let $L>0$ and $M>0$ respectively denote Lipschitz constants of $f$ and $\nabla f$ with respect to $\|\cdot\|$ on the convex hull of $B(X_0,\sigma_T(X_0)+1):= X_0 + B(0,\sigma_T(X_0)+1)$ where 
\begin{subequations}
\label{eq:sup_trajectory_T_smooth}
\begin{align}
    \sigma_T(X_0) := & \sup\limits_{x \in \mathcal{C}^1(\mathbb{R}_+,\mathbb{R}^n)} ~~ \int_0^T \|x'(t)\|dt \\
  & ~~ \mathrm{subject~to} ~~~ 
\left\{ 
\begin{array}{l}
x'(t) = - \nabla f(x(t)), ~\forall t > 0,\\[3mm] x(0) \in X_0.
\end{array}
\right.
\end{align}
\end{subequations}
Above, $\mathcal{C}^1(\mathbb{R}_+,\mathbb{R}^n)$ is the set of continuous functions on $[0,\infty )$ that are continuously differentiable on $(0,\infty)$. Note that $\sigma_T(X_0)>-\infty$ since $f$ is lower bounded and differentiable with a locally Lipschitz gradient. For every $x_0\in X_0$, there hence exists a unique globally defined continuous gradient trajectory of $f$ initialized at $x_0$ \cite[Theorem 17.1.1]{attouch2014variational}. Also, $\sigma_T(X_0)<\infty$ since for any feasible point $x(\cdot)$ of \eqref{eq:sup_trajectory_T_smooth}, we have 
\begin{subequations}
\begin{align}
    \int_0^T \|x'(t)\|dt & \leqslant \sqrt{T} \sqrt{ \int_0^T \|x'(t)\|^2 dt} \label{eq:length_finite_smooth_a} \\
    & = \sqrt{T} \sqrt{ -\int_0^T \langle \nabla f(x(t)), x'(t) \rangle dt} \label{eq:length_finite_smooth_b} \\
    & = \sqrt{T} \sqrt{ -\int_0^T (f \circ x)'(t) dt} \label{eq:length_finite_smooth_c} \\
    & = \sqrt{T} \sqrt{ f(x(0)) - f(x(T))} \label{eq:length_finite_smooth_d} \\
    & \leqslant \sqrt{T\left(\sup_{X_0} f - \inf_{\mathbb{R}^n} f\right)} < \infty. \label{eq:length_finite_smooth_e}
\end{align}
\end{subequations}
Indeed, \eqref{eq:length_finite_smooth_a} is due to the Cauchy-Schwarz inequality. \eqref{eq:length_finite_smooth_e} holds because $X_0$ is bounded and $f$ is lower bounded.

It is easy to check that $L$ and $ML$ are respectively Lipschitz and gradient Lipschitz constants on $[0,T]$ of all continuous gradient trajectories of $f$ initialized in $X_0$. Indeed, let $x:\mathbb{R}_+\rightarrow \mathbb{R}^n$ be a continuous gradient trajectory initialized in $X_0$. Since $x(t) \in B(X_0,\sigma_T(X_0))$ for all $t\in [0,T]$, we have $\|x'(t)\| = \|\nabla f(x(t))\| \leqslant L$. By the mean value theorem \cite[5.19 Theorem]{rudin1964principles}, for all $s,t \in [0,T]$ we have $\|x'(t)-x'(s)\| = \|\nabla f(x(t)) - \nabla f(x(s))\| \leqslant M \|x(t)-x(s)\| \leqslant M L |t-s|$. As a byproduct, we get the Taylor bound
\begin{subequations}
    \begin{align}
    \|x(t)-x(s) - x'(s)(t-s)\| & = \left\| \int_s^t (x'(\tau) - x'(s)) d\tau \right\| \\
    & \leqslant \left| \int_s^t \| x'(\tau) - x'(s) \| d\tau \right| \\
    & \leqslant \left|\int_s^t ML | \tau - s | d\tau \right| \\
    & = \frac{ML}{2}(t-s)^2. \label{eq:local_error}
    \end{align}
\end{subequations}

Let $\epsilon \in (0,1)$ and let $\bar{\alpha} := 2\epsilon e^{-MT}/(LMT)$. Consider $\alpha_0,\alpha_1,\alpha_2,\hdots \in (0,\bar{\alpha}]$ and a sequence $x_0,x_1,x_2,\hdots \in \mathbb{R}^n$ such that $x_{k+1} = x_k - \alpha_k \nabla f(x_k)$ for all $k \in \mathbb{N}$ and $x_0 \in X_0$. Let $x:\mathbb{R}_+\rightarrow \mathbb{R}^n$ be the unique continuous gradient trajectory of $f$ initialized at $x_0$. Let $t_0:=0$ and $t_k := \alpha_0+\cdots+\alpha_{k-1}$ when $k\in \mathbb{N}^*$ so that $\|x_0 - x(t_0)\| = 0 \leqslant \epsilon$. Assume that \eqref{eq:tracking} holds up to some index $K$. For $k=0,\hdots,K$, we have
\begin{subequations}
    \begin{align}
        \|x_{k+1} - x(t_{k+1})\| = &  ~\| x_k - \alpha_k \nabla f(x_k) - x(t_{k+1}) \| \label{local_a} \\[2mm]
        \leqslant  & ~ \| x_k - \alpha_k \nabla f(x_k) - [x(t_k) - \alpha_k \nabla f(x(t_k))] \|~+ \label{local_b} \\[2mm] & ~ \| x(t_k) - \alpha_k \nabla f(x(t_k)) - x(t_{k+1}) \| \label{local_c} \\[2mm]
        \leqslant & ~ \|x_k - x(t_k)\| + \alpha_k \|\nabla f(x_k) - \nabla f(x(t_k))\| ~+ \label{local_d} \\[2mm]
        & ~ ML\alpha_k^2/2 \label{local_e} \\[2mm]
        \leqslant & ~ (1+\alpha_k M)\|x_k - x(t_k)\| + ML\alpha_k^2/2. \label{local_f}
    \end{align}
\end{subequations}
The term in \eqref{local_c} is equal to the local truncation error $\|x(t_{k+1})  - x(t_k) - x'(t_k)(t_{k+1}-t_k) \|$, and can hence be bounded above using \eqref{eq:local_error}. In the second term of \eqref{local_d}, we invoke a Lipschitz constant of $\nabla f$ on the convex hull of $B(X_0,\sigma_T(X_0)+1)$. Indeed, by the induction hypothesis and $\epsilon < 1$, $x_k$ and $x(t_k)$ belong to $B(X_0,\sigma_T(X_0)+\epsilon)$. Hence 
\begin{subequations}
    \begin{align}
        \|x_{K+1} - x(t_{K+1})\| \leqslant &~ \prod_{k=0}^K (1+\alpha_kM)\|x_0-x(t_0)\|~+ \label{eq:global_a}\\
        &~ \sum_{k=0}^K \frac{ML\alpha_k^2}{2} \prod_{l=k+1}^{K} (1+\alpha_lM)\label{eq:global_b}\\
        \leqslant &~ \prod_{l=0}^{K} (1+\alpha_lM) \sum_{k=0}^K \frac{ML\alpha_k^2}{2}\label{eq:global_c}\\
        \leqslant &~ \left(1+\frac{M}{K+1}\sum_{k=0}^K \alpha_k\right)^{K+1} \sum_{k=0}^K \frac{ML\bar{\alpha}\alpha_k}{2}\label{eq:global_d}\\
        \leqslant &~ \left(1+\frac{MT}{K+1}\right)^{K+1} \frac{ML\bar{\alpha}T}{2}\label{eq:global_e}\\
        \leqslant &~~ \bar{\alpha} LMTe^{MT}/2\label{eq:global_f}\\
        = &~~ \epsilon.\label{eq:global_g}
    \end{align}
\end{subequations}
The term in \eqref{eq:global_a} is equal to zero because $x_0 = x(t_0)$. By convention, we set the product in \eqref{eq:global_b} to be equal to one if the index set for $l$ is empty. We bound the partial products ranging from $k+1$ to $K$ inside the summation of \eqref{eq:global_b} by the full product ranging from $0$ to $K$ in order to obtain \eqref{eq:global_c}. We then use the inequality of arithmetic and geometric means and $\alpha_k\leqslant\bar{\alpha}$ in order to obtain \eqref{eq:global_d}. In order to prove the induction step, we assume that $\alpha_0+\cdots+\alpha_K \leqslant T$, hence \eqref{eq:global_e}. \eqref{eq:global_f} follows from $\ln(1+x) \leqslant x$ for all $x>0$. Finally, \eqref{eq:global_g} holds because $\bar{\alpha} = 2\epsilon e^{-MT}/(LMT)$.
\end{proof}

\begin{remark}
\label{rem:tracking}
If $X_0$ is a singleton $\{x_0\}$ and $x:[0,\hat{T})\rightarrow \mathbb{R}^n$ is a gradient trajectory initialized at $x_0$, then one may remove the assumption that $f$ is lower bounded in Proposition \ref{prop:tracking} and instead require that $T \in (0,\hat{T})$. In that case, $\sigma_T(X_0)=\int_0^T \|x'(t)\|dt < \infty$ since $\nabla f$ is Lipschitz continuous on the bounded set $x([0,T])$.
\end{remark}

\section{Uniform Kurdyka-\L{}ojasiewicz inequality}
\label{sec:Uniform Kurdyka-Lojasiewicz inequality}

The Kurdyka-\L{}ojasiewicz inequality generalizes the \L{}ojasiewicz gradient inequality \cite[Proposition 1 p. 67]{law1965ensembles} from lower semicontinuous functions definable in polynomially bounded o-minimal structures \cite{loi2016lojasiewicz} 
to lower semicontinuous functions definable in arbitrary o-minimal structures \cite[Theorem 14]{bolte2007clarke}. 

O-minimal structures (short for order-minimal) were originally considered in \cite{van1984remarks,pillay1986definable}. They are founded on the observation that many properties of semi-algebraic sets can be deduced from a few simple axioms \cite{van1998tame}. Recall that a subset $A$ of $\mathbb{R}^n$ is semi-algebraic \cite{bochnak2013real} if it is a finite union of basic semi-algebraic sets, which are of the form $\{ x \in \mathbb{R}^n : p_i(x) = 0, ~ i = 1,\hdots,k; ~  p_i(x) > 0, ~ i = k+1,\hdots,m \}$ where $p_1,\hdots,p_m \in \mathbb{R}[X_1,\hdots,X_n]$ (i.e., polynomials with real coefficients). 

\begin{definition}
\label{def:o-minimal}
\cite[Definition p. 503-506]{van1996geometric}
An o-minimal structure on the real field is a sequence $S = (S_k)_{k \in \mathbb{N}}$ such that for all $k \in \mathbb{N}$:
\begin{enumerate}
    \item $S_k$ is a boolean algebra of subsets of $\mathbb{R}^k$, with $\mathbb{R}^k \in S_k$;
    \item $S_k$ contains the diagonal $\{(x_1,\hdots,x_k) \in \mathbb{R}^k : x_i = x_j\}$ for $1\leqslant i<j \leqslant k$;
    \item If $A\in S_k$, then $A\times \mathbb{R}$ and $\mathbb{R}\times A$ belong to $S_{k+1}$;
    \item If $A \in S_{k+1}$ and $\pi:\mathbb{R}^{k+1}\rightarrow\mathbb{R}^k$ is the projection onto the first $k$ coordinates, then $\pi(A) \in S_k$;
    \item $S_3$ contains the graphs of addition and multiplication;
    \item $S_1$ consists exactly of the finite unions of open intervals and singletons. 
\end{enumerate}
\end{definition}

A subset $A$ of $\mathbb{R}^n$ is definable in an o-minimal structure $(S_k)_{k\in\mathbb{N}}$ if $A \in S_k$. A function $f:\mathbb{R}^n\rightarrow\mathbb{R}$ is definable in an o-minimal structure if its graph, that is to say $\{ (x,t) \in \mathbb{R}^{n+1} : f(x)=t \}$, is definable in that structure.
Examples of o-minimal structures include the real field with constants, whose definable sets are the semi-algebraic sets (by Tarski-Seidenberg \cite{tarski1951decision,seidenberg1954new}), the real field with restricted analytic functions, whose definable sets are the globally subanalytic sets (by Gabrielov \cite{gabrielov1968projections,van1986generalization}), the real field with the exponential function (by Wilkie \cite{wilkie1996model}), the real field with the exponential and restricted analytic functions (by van den Dries, Macintyre, and Marker \cite{van1994elementary}), the real field with restricted analytic and real power functions (by Miller \cite{miller1994expansions}), and the real field with convergent generalized power series (by van den Dries and Speissegger \cite{van1998real}). Note that there is no largest o-minimal structure \cite{rolin2003quasianalytic}. Recall that throughout this paper, we fix an arbitrary o-minimal structure $(S_k)_{k\in\mathbb{N}}$.

We next state the uniform Kurdyka-\L{}ojasiewicz inequality. Given a subset of $S$ of $\mathbb{R}^n$, let $\mathring{S}$ and $\overline{S}$ denote the interior and closure of $S$ in $\mathbb{R}^n$ respectively. A function $\psi:S \rightarrow S$ is a homeomorphism if it is a continuous bijection and the inverse function $\psi^{-1}$ is continuous. $\psi:S \rightarrow S$ is a diffeomorphism if $\mathring{S} \neq \emptyset$, $\psi$ is a homeomorphism, and both $\psi$ and $\psi^{-1}$ are continuously differentiable on $\mathring{S}$. Given $x\in \mathbb{R}^n$, consider the distance of $x$ to $S$ defined by $d(x,S) := \inf \{ \|x-y\| : y \in S \}$. Given a locally Lipschitz function $f:\mathbb{R}^n\rightarrow\mathbb{R}$, a real number $v$ is critical value of $f$ in $S$ if there exists $x \in S$ such that $v = f(x)$ and $0\in \partial f(x)$. Given a set-valued mapping $F:\mathbb{R}^n\rightrightarrows\mathbb{R}^m$ and $y \in \mathbb{R}^m$, let $F^{-1}(y) := \{ x \in \mathbb{R}^n : F(x) \ni y \}$.

\begin{proposition}
\label{prop:KL}
Let $f:\mathbb{R}^n \rightarrow \mathbb{R}$ be a locally Lipschitz definable function and let $X$ be a bounded subset of $\mathbb{R}^n$. Let $V$ be the set of critical values of $f$ in $\overline{X}$ if it is nonempty, otherwise let $V:=\{0\}$. There exists a concave definable diffeomorphism $\psi:[0,\infty)\rightarrow[0,\infty)$ such that
\begin{equation*}
    \forall x \in X \setminus (\partial \tilde{f})^{-1}(0), ~~~ d(0,\partial (\psi \circ \tilde{f})(x)) \geqslant 1
\end{equation*}
where $\tilde{f}(x) := d(f(x),V)$ for all $x\in \mathbb{R}^n$.
\end{proposition}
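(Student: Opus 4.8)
The plan is to derive the uniform inequality from the standard Kurdyka-\L{}ojasiewicz inequality applied to the auxiliary function $\tilde f = d(f(\cdot),V)$, exploiting that $V$ is finite. First I would record the structural facts: by the definable Morse-Sard theorem \cite[Corollary 9]{bolte2007clarke}, $f$ has finitely many critical values, so $V$ is a finite set, hence $t \mapsto d(t,V)$ is a definable, locally Lipschitz, piecewise-affine function of one variable, and therefore $\tilde f$ is locally Lipschitz and definable. The key geometric observation is that $\tilde f \geqslant 0$ with $\tilde f^{-1}(0) = f^{-1}(V)$, and away from the finitely many points equidistant from two elements of $V$, the map $t\mapsto d(t,V)$ is differentiable with derivative $\pm 1$; by the chain rule for the Clarke subdifferential (\cite[Chapter 2]{clarke1990}) this gives, at points $x$ where $f(x)\notin V$ and $f(x)$ is not equidistant from two critical values, $\partial \tilde f(x) = \pm \partial f(x)$ (and in the remaining null set of ``bad'' points a containment suffices). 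In particular $0\in\partial\tilde f(x)$ whenever $0\in\partial f(x)$, so $(\partial\tilde f)^{-1}(0)\supseteq (\partial f)^{-1}(0)$.

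Next I would invoke the nonsmooth Kurdyka-\L{}ojasiewicz inequality \cite[Theorem 14]{bolte2007clarke} for the locally Lipschitz definable function $\tilde f \geqslant 0$ on the bounded set $X$. Since $\inf_{\overline X}\tilde f = 0$ is attained (or $X$ avoids the zero set entirely), there exist $\eta>0$ and a concave definable diffeomorphism $\varphi:[0,\eta)\to[0,\infty)$, continuously differentiable on $(0,\eta)$ with $\varphi'>0$, such that $\varphi'(\tilde f(x))\, d(0,\partial\tilde f(x)) \geqslant 1$ for all $x\in X$ with $0<\tilde f(x)<\eta$. Because $V$ is finite, $\tilde f$ is bounded on the bounded set $X$, say $\tilde f \leqslant R$ on $X$; I then extend/replace $\varphi$ by a concave definable diffeomorphism $\psi:[0,\infty)\to[0,\infty)$ that agrees with a rescaled version of $\varphi$ near $0$ and is affine with sufficiently large slope on $[\eta/2,\infty)$, so that $\psi'(s)\geqslant \max\{\varphi'(s),\, c\}$ for an appropriate constant $c$ covering the region $\tilde f(x)\in[\eta/2,R]$. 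On that region $d(0,\partial\tilde f(x))$ is bounded below by a positive constant — this is where I use that $\partial\tilde f(x)=\pm\partial f(x)$ and that $X\setminus(\partial\tilde f)^{-1}(0)$ forces $f(x)$ away from $V$, combined with lower semicontinuity/compactness of $\overline X$ to get $\inf d(0,\partial f)>0$ on the relevant compact piece — so choosing $c$ as the reciprocal of that infimum closes the estimate. Finally, using the chain rule $\partial(\psi\circ\tilde f)(x) = \psi'(\tilde f(x))\,\partial\tilde f(x)$ (valid since $\psi$ is $C^1$ with positive derivative), I conclude $d(0,\partial(\psi\circ\tilde f)(x)) = \psi'(\tilde f(x))\,d(0,\partial\tilde f(x)) \geqslant 1$ for all $x\in X\setminus(\partial\tilde f)^{-1}(0)$.

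The main obstacle I anticipate is handling the ``seam'' points where $f(x)$ is equidistant from two consecutive critical values: there the one-variable function $d(\cdot,V)$ has a downward kink, so $\partial\tilde f(x)$ is a segment (a convex combination of $+\partial f(x)$ and $-\partial f(x)$) rather than $\pm\partial f(x)$, and one must check it still does not contain $0$ unless $0\in\partial f(x)$, and that $d(0,\partial\tilde f(x))$ stays controlled. I expect this to be resolved by noting the seam set $\{x: f(x)=(v_i+v_{i+1})/2\}$ is disjoint from $f^{-1}(V)$, so $\tilde f$ is bounded away from $0$ there, putting these points squarely in the ``affine part'' of $\psi$ where the crude constant bound $c$ suffices; alternatively one observes that $0 \in \mathrm{conv}(\partial f(x) \cup (-\partial f(x)))$ iff $\partial f(x)\cap(-\partial f(x))\neq\emptyset$ is not what's needed — rather $0$ lies in that segment only if $\partial f(x)\ni 0$, since a nonzero $s$ and $-s$ have the origin strictly between them only through the midpoint, which forces $s$ and $-s$ to be the endpoints with $0$ interior, hence some convex combination is $0$; this case must be excluded by a dimension/definability argument showing it occurs on a lower-dimensional definable set or handled directly via the uniform distance bound. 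A secondary technical point is assembling the global concave diffeomorphism $\psi$ on all of $[0,\infty)$ from the local KL function $\varphi$ while preserving concavity and the definable diffeomorphism property; this is routine but must be done carefully, patching with an affine piece of large slope and smoothing the junction within the o-minimal structure.
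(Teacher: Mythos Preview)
Your approach is essentially the paper's: invoke the nonsmooth KL inequality near the value $0$ (the paper does this by applying \cite[Corollary 15]{bolte2007clarke} to each $|f-v|$, $v\in V$, which is the same thing since $\tilde f=|f-v|$ in a neighbourhood once $\tilde f<\rho:=\min_{v>v'}(v-v')/2$), then cover the region $\{\tilde f\geqslant \rho'\}$ by a compactness bound on $d(0,\partial f)$ and extend $\psi$ affinely with slope at least $1/c$.

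Your anticipated obstacle at seam points, however, is a misdiagnosis, and both of your proposed resolutions are wrong. If $f(x)=(v_i+v_{i+1})/2$ with $v_i<v_{i+1}$ consecutive in $V$, then by continuity $f(y)\in(v_i,v_{i+1})$ for $y$ near $x$, so $\tilde f(y)=\min(f(y)-v_i,\,v_{i+1}-f(y))\leqslant (v_{i+1}-v_i)/2=\tilde f(x)$; thus $x$ is a \emph{local maximum} of $\tilde f$ and Fermat's rule gives $0\in\partial\tilde f(x)$. Every seam point therefore already lies in $(\partial\tilde f)^{-1}(0)$ and is excluded from the statement --- nothing to prove. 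Your first fix (``put seam points in the affine part'') cannot work, because there $d(0,\partial\tilde f(x))=0$ and no slope for $\psi$ will make $\psi'(\tilde f(x))\,d(0,\partial\tilde f(x))\geqslant 1$. Your second fix (arguing $0\notin\partial\tilde f(x)$ unless $0\in\partial f(x)$) is false: $0=\tfrac12 s+\tfrac12(-s)$ for any $s\in\partial f(x)$. Once you see that $(\partial\tilde f)^{-1}(0)\cap X$ is exactly $f^{-1}(V)\cup\{\text{seams}\}$ intersected with $X$, the remaining points have $\partial\tilde f(x)=\pm\partial f(x)$ on the nose, and your compactness bound $c:=\inf\{d(0,\partial f(x)):x\in\overline X,\ \tilde f(x)\geqslant\rho'\}>0$ (positive because this compact set contains no critical point of $f$ in $\overline X$) closes the argument exactly as in the paper.
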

\begin{proof}
By the definable Morse-Sard theorem \cite[Corollary 9]{bolte2007clarke}, the set $V$ is finite. In addition, by the Kurdyka-\L{}ojasiewicz inequality \cite[Corollary 15]{bolte2007clarke}, there exist $\epsilon>0$ and a strictly increasing concave continuous definable function $\psi:[0,\epsilon)\rightarrow \mathbb{R}$ that is continuously differentiable on $(0,\epsilon)$ with $\psi(0) = 0$ such that, for all $x \in X$ and $v \in V$ satisfying $0 < | f(x)-v | < \epsilon$, we have $d(0,\partial (\psi \circ | f-v|)(x)) \geqslant 1$. The fact that $\psi$ can be assumed to be concave is due to the monotonicity theorem \cite[(1.2) p. 43]{van1998tame} \cite[Lemma 2]{kurdyka1998gradients} by following the argument in \cite[Theorem 4.1]{attouch2010proximal}. Let $\rho:= \inf \{(v-v')/2: v,v' \in V, v > v'\}>0$ and $\rho' : = \min \{ \rho, \epsilon\}$. Observe that $c>0$ where
\begin{equation*}
    c := \inf \{ \|s\| : 
s \in \partial f(x), ~ x\in X, ~ | f(x) - v | \geqslant \rho',~ v\in V \}.
\end{equation*}
Indeed, assume that there exist sequences $(s_k)_{k\in \mathbb{N}}$ and $(x_k)_{k\in\mathbb{N}}$ such that $\|s_k\|$ converges to zero and $s_k \in \partial f(x_k)$, $x_k\in X$, and $| f(x_k) - v | \geqslant \rho'$ for all $v\in V$. Since $X$ is bounded, there exist subsequences (again denoted $s_k$ and $x_k$) such that $x_k$ has a limit $x^*$ in $\overline{X}$. Since $f$ is continuous, $| f(x^*) - v | \geqslant \rho'>0$ for all $v \in V$. Also, since $s_k$ converges to zero, by \cite[2.1.5 Proposition (b) p. 29]{clarke1990}, we have $0\in \partial f(x^*)$. This yields a contradiction. 

After possibly reducing $\epsilon$, we may assume that $\lim_{t\nearrow\rho'} \psi'(t)>0$. If $\lim_{t\nearrow\rho'} \psi'(t) \geqslant 1/c$, then we may extend $\psi$ to an affine function on $[\rho',\infty)$ with slope equal to $\lim_{t\nearrow\rho'} \psi'(t)$. Otherwise, we can multiply $\psi$ by $1/(c\lim_{t\nearrow\rho'} \psi'(t))$ before taking the affine extension. Now let $x \in X$ be such that $0 \notin \partial \tilde{f}(x)$. If $f(x)$ is strictly within $\rho'$ distance of some $v \in V$, then from $\rho' \leqslant \rho$ it follows that $\tilde{f}(\tilde{x}) = d(f(\tilde{x}),V) = | f(\tilde{x})-v|$ for all $\tilde{x}$ in a neighborhood of $x$ in $\mathbb{R}^n$, and thus $d(0,\partial ( \psi \circ \tilde{f})(x)) = d(0,\partial (\psi \circ | f-v|)(x)) \geqslant 1$. The inequality is due to $\rho' \leqslant \epsilon$. Otherwise, by \cite[2.3.9 Theorem (Chain Rule I) (ii) p. 42]{clarke1990} we have $d(0,\partial ( \psi \circ \tilde{f})(x)) = \psi'(\tilde{f}(x)) d(0,\partial \tilde{f} (x)) = \psi'(\tilde{f}(x)) d(0,\partial f(x)) \geqslant \psi'(\tilde{f}(x)) c \geqslant 1$. 
\end{proof}

We say that $\psi$ in Proposition \ref{prop:KL} is a desingularizing function of $f$ on $X$. In order to prove Proposition \ref{prop:length_continuous}, we recall the following result.

\begin{proposition}
\label{prop:chain_rule}
Let $f:\mathbb{R}^n \rightarrow \mathbb{R}$ be a definable function and let $x:(0,T)\rightarrow \mathbb{R}^n$ with $0 < T \leqslant \infty$ be an absolutely continuous function. If $f$ is locally Lipschitz at $x(t)$ for all $t \in (0,T)$ and $x'(t) \in -\partial f(x(t))$ for almost every $t\in(0,T)$, then
\begin{equation*}
    (f \circ x)'(t) = -\|x'(t)\|^2 = -d(0,\partial f(x(t)))^2,~~~\mathrm{for~a.e.}~t \in (0,T).
\end{equation*}
\end{proposition}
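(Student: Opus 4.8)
The plan is to establish the two equalities separately, relying on the fact that $f$ definable (hence also when restricted to a neighborhood of the trajectory) and the known chain rule for absolutely continuous curves composed with locally Lipschitz definable functions. First I would recall the key regularity fact that a locally Lipschitz definable function $f$ is differentiable on a dense open definable set whose complement has measure zero, and more importantly that for such $f$ the composition $t \mapsto (f\circ x)(t)$ is absolutely continuous on every compact subinterval of $(0,T)$ — this follows because $x$ is absolutely continuous (hence locally Lipschitz a.e. in the sense of bounded variation on compacts) and $f$ is locally Lipschitz, so $f\circ x$ is locally absolutely continuous and therefore differentiable a.e. The crux is to identify the value of the derivative. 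Here I would invoke the chain rule for the Clarke subdifferential along absolutely continuous curves (as in \cite[Lemma 3.3]{bolte2007clarke} or the projection formula for definable functions): for a.e.\ $t\in(0,T)$, for every $s\in\partial f(x(t))$ one has $(f\circ x)'(t) = \langle s, x'(t)\rangle$. The essential point, valid because $f$ is definable, is that $\partial f(x(t))$ lies in an affine subspace orthogonal to which $x'(t)$ contributes nothing — concretely, the inner product $\langle s, x'(t)\rangle$ is the same for all $s \in \partial f(x(t))$, at almost every $t$.

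Granting that, the computation is short. Since $x'(t) \in -\partial f(x(t))$ for a.e.\ $t$, we may take $s = -x'(t) \in \partial f(x(t))$, which gives
\begin{equation*}
    (f\circ x)'(t) = \langle -x'(t), x'(t)\rangle = -\|x'(t)\|^2, \qquad \text{for a.e. } t\in(0,T).
\end{equation*}
For the second equality, I would argue that at a.e.\ $t$ the vector $-x'(t)$ is in fact the element of minimal norm in $\partial f(x(t))$. One way: the set $\partial f(x(t))$ is convex and compact, and its minimal-norm element $s^\star$ is characterized by $\langle s^\star, s - s^\star\rangle \geqslant 0$ for all $s\in\partial f(x(t))$; using that $\langle s, x'(t)\rangle$ is constant over $s\in\partial f(x(t))$ (the definability input again), one shows the minimal-norm element satisfies $\|s^\star\|^2 = \langle s^\star, -x'(t)\rangle = \|x'(t)\|^2$, whence $\|x'(t)\| = \|s^\star\| = d(0,\partial f(x(t)))$. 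Combining, $(f\circ x)'(t) = -\|x'(t)\|^2 = -d(0,\partial f(x(t)))^2$ a.e.

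The main obstacle, and the place where definability is genuinely used, is the chain-rule/orthogonality claim that $\langle s, x'(t)\rangle$ does not depend on the choice of $s\in\partial f(x(t))$ for a.e.\ $t$. This is not true for general locally Lipschitz $f$ but holds in the definable (indeed in the "Whitney stratifiable") setting, because the Clarke subdifferential of a definable function is contained in the normal cone data of a stratification and $x'(t)$ is tangent to the active stratum for a.e.\ $t$ along an absolutely continuous trajectory — this is precisely the content of the results of Bolte et al.\ on the nonsmooth \L ojasiewicz inequality and the chain rule for definable functions. I would cite \cite{bolte2007clarke} (and possibly \cite{davis2020stochastic}-type arguments, though staying within what the paper already references) for this, and then the rest is the elementary convex-analysis bookkeeping above. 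If one prefers to avoid the stratification machinery, an alternative is to note that $(f\circ x)(t)$ is locally absolutely continuous, differentiate it directly using $x(t+h) = x(t) + h x'(t) + o(h)$ together with the fact that for definable $f$ the one-sided directional derivative in the direction $x'(t)$ exists and equals $\langle s, x'(t)\rangle$ for any $s\in\partial f(x(t))$ at points of differentiability of $x$ — but this ultimately relies on the same orthogonality phenomenon.
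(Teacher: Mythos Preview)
Your approach is essentially the same as the paper's: invoke the definable chain rule to obtain $(f\circ x)'(t)=\langle s,x'(t)\rangle$ for every $s\in\partial f(x(t))$ at a.e.\ $t$, then substitute $s=-x'(t)$ for the first equality, and finally deduce $\|x'(t)\|=d(0,\partial f(x(t)))$ a.e. The paper simply cites \cite[Proposition~2(iv)]{bolte2020conservative} and \cite[Lemma~5.2]{davis2020stochastic} for these two steps, whereas you spell out the second one; note that your sentence ``$\|s^\star\|^2=\langle s^\star,-x'(t)\rangle=\|x'(t)\|^2$'' is not quite justified as written (the variational inequality gives only $\|s^\star\|^2\leqslant\langle s^\star,-x'(t)\rangle$), but the conclusion follows immediately once you add Cauchy--Schwarz together with $-x'(t)\in\partial f(x(t))$.
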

\begin{proof}
Since $f$ is definable and locally Lipschitz at $x(t)$ for all $t\in(0,T)$, it satisfies the chain rule \cite[Proposition 2 (iv)]{bolte2020conservative} (see also \cite[Theorem 5.8]{davis2020stochastic} and \cite[Lemma 2.10]{drusvyatskiy2015quadratic}), that is to say $(f \circ x)'(t) = \langle s , x'(t) \rangle$ for all $s\in \partial f(x(t))$ and almost every $t\in (0,T)$. In particular, we may take $s := - x'(t) \in \partial f(x(t))$.
Finally, by \cite[Lemma 5.2]{davis2020stochastic} (see also \cite[Proposition 4.10]{drusvyatskiy2015curves}) it holds that $\|x'(t)\| = d(0,\partial f(x(t)))$ for almost every $t\in (0,T)$. 
\end{proof}

The following proposition is a refinement of \cite[Theorem 2]{kurdyka1998gradients}, as discussed in the introduction.

\begin{proposition}
\label{prop:length_continuous}
Let $f:\mathbb{R}^n \rightarrow \mathbb{R}$ be a locally Lipschitz definable function and let $X$ be a bounded subset of $\mathbb{R}^n$. Assume that $f$ has at most $m\in \mathbb{N}^*$ critical values in $\overline{X}$. Let $\psi$ be a desingularizing function of $f$ on $X$. If $x:[0,T]\rightarrow X$ with $T \geqslant 0$ is an absolutely continuous function such that $x'(t) \in -\partial f(x(t))$ for almost every $t\in(0,T)$, then
\begin{equation}
\label{eq:length_continuous}
    \frac{1}{2m}\int_0^T\|x'(t)\|dt ~\leqslant~ \psi\left(\frac{f(x(0))-f(x(T))}{2m}\right).
\end{equation}
\end{proposition}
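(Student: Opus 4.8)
The plan is to transport the uniform Kurdyka-\L{}ojasiewicz inequality of Proposition~\ref{prop:KL} along the trajectory, by composing the desingularizing function $\psi$ with the ``distance to critical values'' function $\tilde f = d(f(\cdot),V)$. I would write $\phi(t) := \psi(\tilde f(x(t)))$ for $t\in[0,T]$, and set $b := f(x(0))$, $a := f(x(T))$. By Proposition~\ref{prop:chain_rule}, $(f\circ x)'(t) = -\|x'(t)\|^2$ and $\|x'(t)\| = d(0,\partial f(x(t)))$ for almost every $t\in(0,T)$; in particular $f\circ x$ is nonincreasing, hence maps $[0,T]$ onto $[a,b]$, so $b\geqslant a$, and $\int_0^T\|x'(t)\|\,dt<\infty$ since $x$ is absolutely continuous. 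If $a=b$ then $x'=0$ almost everywhere and the inequality holds because $\psi(0)=0$, so I may assume $a<b$.

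The heart of the argument is the pointwise estimate $|\phi'(t)|\geqslant\|x'(t)\|$ for almost every $t$. On the set where $\tilde f(x(t))=0$, i.e.\ $f(x(t))\in V$, continuity of $f\circ x$ and finiteness of $V$ force $f\circ x$ to be locally constant, so $x'(t)=0$ for almost every such $t$ and the estimate is trivial. Fixing now $t$ with $\tilde f(x(t))>0$, one has $f(x(t))\notin V$, so near $x(t)$ we have $\tilde f = \pm(f-v)$ for the locally constant nearest critical value $v$, whence $\partial\tilde f(x(t)) = \pm\partial f(x(t))$; since $x(t)$ being critical would place $f(x(t))$ in $V$, we get $0\notin\partial\tilde f(x(t))$, i.e.\ $x(t)\in X\setminus(\partial\tilde f)^{-1}(0)$. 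Proposition~\ref{prop:KL} then gives $d(0,\partial(\psi\circ\tilde f)(x(t)))\geqslant 1$, and the Clarke chain rule \cite[2.3.9 Theorem (Chain Rule I) (ii) p. 42]{clarke1990} — applicable since $\psi$ is $\mathcal{C}^1$ with $\psi'>0$ on $(0,\infty)$ — turns this into $\psi'(\tilde f(x(t)))\,d(0,\partial f(x(t)))\geqslant 1$, that is $\psi'(\tilde f(x(t)))\|x'(t)\|\geqslant 1$. Finally $\phi$ is differentiable almost everywhere on $\{\tilde f\circ x>0\}$, with $\phi'(t)=\psi'(\tilde f(x(t)))(\tilde f\circ x)'(t)$ and $|(\tilde f\circ x)'(t)| = |(f\circ x)'(t)| = \|x'(t)\|^2$, so $|\phi'(t)| = \psi'(\tilde f(x(t)))\|x'(t)\|^2\geqslant\|x'(t)\|$.

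To turn this into the stated bound I would partition $[a,b]$ by the finite set consisting of $a$, $b$, the critical values of $f$ in $(a,b)$, and the midpoints of consecutive critical values of $f$ lying in $(a,b)$. Since $f$ has at most $m$ critical values in $\overline X$, this produces at most $2m$ subintervals $[q_{j-1},q_j]$, with $\sum_j\ell_j=b-a$ where $\ell_j:=q_j-q_{j-1}$, and on each of which $d(\cdot,V)$ is affine of slope $\pm1$. Consequently, on the time interval $I_j:=\{t\in[0,T]:f(x(t))\in[q_{j-1},q_j]\}$ the function $\tilde f\circ x$ (which equals $\pm f\circ x$ plus a constant there) is monotone with range an interval of length $\ell_j$, so $\phi$ is monotone on $I_j$. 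Combining the pointwise estimate, monotonicity of $\phi$ on $I_j$, and subadditivity of the concave map $\psi$ with $\psi(0)=0$, one gets $\int_{I_j}\|x'(t)\|\,dt\leqslant\int_{I_j}|\phi'(t)|\,dt\leqslant|\phi(\sup I_j)-\phi(\inf I_j)|\leqslant\psi(\ell_j)$. Summing over $j$, padding with zero terms up to exactly $2m$ summands, and applying Jensen's inequality \cite{jensen1906fonctions} to the concave $\psi$ yields $\tfrac1{2m}\int_0^T\|x'(t)\|\,dt\leqslant\tfrac1{2m}\sum_j\psi(\ell_j)\leqslant\psi\bigl(\tfrac1{2m}\sum_j\ell_j\bigr)=\psi\bigl(\tfrac{b-a}{2m}\bigr)$, which is exactly \eqref{eq:length_continuous} since $b-a=f(x(0))-f(x(T))$.

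The analytic inputs (the uniform Kurdyka-\L{}ojasiewicz inequality, the two chain rules, the concavity of $\psi$) slot in routinely; the main obstacle is the bookkeeping around the nonsmooth function $\tilde f=d(f(\cdot),V)$. One must verify that the partition has at most $2m$ pieces with telescoping lengths, that $d(\cdot,V)$ is affine of slope $\pm1$ on each piece so that $\tilde f\circ x$ and $\phi$ are genuinely monotone there with range of length exactly $\ell_j$, and that the set where $\tilde f(x(t))=0$ — on which $\psi$ need not be differentiable and $\phi$ need not be absolutely continuous — contributes nothing, because there the trajectory is stationary.
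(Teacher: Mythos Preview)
Your argument is correct and takes a genuinely different route from the paper's proof.

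The paper works in the \emph{time} variable: it first treats the case where $0\notin\partial\tilde f(x(t))$ on all of $(0,T)$ by reparametrizing $\bar x = x\circ\varphi$ with $\varphi'(\bar t)=\psi'(\tilde f(\bar x(\bar t)))$, so that $\bar x'(\bar t)\in\pm\partial(\psi\circ\tilde f)(\bar x(\bar t))$; it then integrates the KL inequality $d(0,\partial(\psi\circ\tilde f))\geqslant 1$ along $\bar x$ and applies Cauchy--Schwarz to obtain $\int_0^T\|x'\|\leqslant |\psi(\tilde f(x(0)))-\psi(\tilde f(x(T)))|$. The general case is handled by cutting $[0,T]$ at the (at most $2m-1$) instants where $0\in\partial\tilde f(x(t))$, applying the special case on each piece, and finishing with Jensen. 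You instead work in the \emph{value} variable: you derive the pointwise bound $|\phi'(t)|\geqslant\|x'(t)\|$ directly from $\psi'(\tilde f)\,d(0,\partial f)\geqslant 1$ and $|(\tilde f\circ x)'|=\|x'\|^2$, then cut $[a,b]=[f(x(T)),f(x(0))]$ at the elements of $V$ and their midpoints so that $\phi$ is monotone on each resulting time interval $I_j$, and use $\int_{I_j}|\phi'|\leqslant|\Delta\phi|\leqslant\psi(\ell_j)$. Both approaches close with the same Jensen step over at most $2m$ pieces.

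Your route is more elementary---it avoids the change of variables and the Cauchy--Schwarz inequality, replacing them with the single observation that for a monotone function the integral of $|\phi'|$ is bounded by its increment (no absolute continuity of $\phi$ is needed, which neatly sidesteps the issue that $\psi'$ may blow up at $0$). The paper's reparametrization, on the other hand, makes the role of the KL inequality as a ``unit-speed'' normalization more transparent and connects naturally to the framing in the introduction about integrating the KL inequality along trajectories.
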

\begin{proof}
We first consider the special case where $0 \notin \partial \tilde{f}(x(t))$ for all $t\in (0,T)$ where $\tilde{f}$ is defined as in Proposition \ref{prop:KL}. Consider the change of variables $\bar{x} = x \circ \varphi$ where $\bar{t}=\varphi^{-1}(t) = \int_0^{t} ds /(\psi' \circ \tilde{f} \circ x)(s)$ and $t=\varphi(\bar{t}) = \int_0^{\bar{t}} (\psi' \circ \tilde{f} \circ \bar{x})(\bar{s}) d\bar{s}$ by the inverse function theorem \cite[Theorem 17.7.2]{tao2006analysis}. We have $\bar{x}'(\bar{t}) = \varphi'(\bar{t})x'(t) \in -(\psi' \circ \tilde{f} \circ \bar{x})(\bar{t}) \partial f(\bar{x}(\bar{t})) = \pm \partial (\psi \circ \tilde{f})(\bar{x}(\bar{t}))$ for almost every $\bar{t}\in (0,\overline{T})$, where $\overline{T} = \varphi^{-1}(T)$ and the sign is constant. By Proposition \ref{prop:chain_rule} and the Cauchy-Schwarz inequality, we thus have
\begin{subequations}
\begin{align}
    \overline{T} \leqslant & \int_0^{\overline{T}} d(0,\partial (\psi \circ \tilde{f})(\bar{x}(\bar{t})))d\bar{t} \label{eq:god_a} \\
    = & \int_0^{\overline{T}} \|\bar{x}'(\bar{t})\|d\bar{t} \label{eq:god_b} \\
    = & \int_0^{T} \|x'(t)\|dt \\
    \leqslant & \sqrt{\overline{T}} \sqrt{\int_0^{\overline{T}} \|\bar{x}'(\bar{t})\|^2d\bar{t}} \\
    = & \sqrt{\overline{T}}\sqrt{|(\psi\circ \tilde{f}\circ\bar{x})(0)-(\psi\circ \tilde{f}\circ\bar{x})(\overline{T})|}.
\end{align}
\end{subequations}
Hence $\overline{T} \leqslant |(\psi\circ \tilde{f}\circ\bar{x})(0)-(\psi\circ \tilde{f}\circ\bar{x})(\overline{T})|$ and, since $\psi$ is concave, we have
\begin{align*}
     \int_0^T \|x'(t)\|dt & \leqslant |(\psi\circ \tilde{f}\circ\bar{x})(0)-(\psi\circ \tilde{f}\circ\bar{x})(\overline{T})| \\
     & \leqslant \psi (|( \tilde{f}\circ\bar{x})(0)-(\tilde{f}\circ\bar{x})(\overline{T})|) \\[2mm]
     & = \psi (f(x(0))-f(x(T))).
\end{align*}
We next consider the general case where there may exist $t\in (0,T)$ such that $0 \in \partial \tilde{f}(x(t))$, namely $0 < t_1 < \cdots < t_{k} < T$, and potentially $(t_k,T)$. For notational convenience, let $t_0 := 0$ and $t_{k+1}:=T$ where $k$ is possibly equal to zero. Since $k \leqslant 2m-1$ and $\psi$ is concave, we have
\begin{align*}
    \int_0^T \|x'(t)\|dt & = \sum_{i=0}^k \int_{t_i}^{t_{i+1}} \|x'(t)\|dt \\
    & \leqslant \sum_{i=0}^k \psi (f(x(t_i))-f(x(t_{i+1}))) \\[-.5mm]
    & \leqslant (k+1) ~ \psi \left( \frac{1}{k+1} \sum_{i=0}^k f(x(t_i))-f(x(t_{i+1}))\right) \\
    & = (k+1) ~ \psi \left( \frac{f(x(0)) - f(x(T))}{k+1} \right) \\
    & \leqslant 2m ~ \psi \left( \frac{f(x(0)) - f(x(T))}{2m} \right). \qedhere
\end{align*}
\end{proof}

We next propose a discrete version of Proposition \ref{prop:length_continuous}.

\begin{proposition}
\label{prop:length_discrete}
Let $f:\mathbb{R}^n \rightarrow \mathbb{R}$ be a definable differentiable function with a locally Lipschitz gradient and let $X$ be a bounded open subset of $\mathbb{R}^n$. Assume that $f$ has at most $m\in \mathbb{N}^*$ critical values in $\overline{X}$ and that $L>0$ is a Lipschitz constant of $f$ on the convex hull of $X$. Let $\psi$ be a desingularizing function of $f$ on $X$ and $\epsilon>0$. There exists $\bar{\alpha}>0$ such that, for all $K \in \mathbb{N}$, $\alpha_0,\hdots,\alpha_K \in (0,\bar{\alpha}]$, and $(x_0,\hdots,x_{K+1}) \in X \times \cdots \times X \times \mathbb{R}^n$ such that $x_{k+1} = x_k - \alpha_k \nabla f(x_k)$ for $k = 0,\hdots,K$, we have
\begin{equation}
\label{eq:length_discrete}
    \frac{1}{(2+\epsilon)m}\sum_{k=0}^K\|x_{k+1}-x_k\| ~\leqslant~ \psi\left(\frac{f(x_0)-f(x_K)}{2m}\right) + \frac{2L}{2+\epsilon}\max_{k=0,\hdots,K} \alpha_k
\end{equation}
and $f(x_0)\geqslant \cdots \geqslant f(x_{K+1})$.
\end{proposition}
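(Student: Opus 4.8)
The plan is to run the proof of Proposition~\ref{prop:length_continuous} ``one iterate at a time''. Keep the notation: $V$ is the set of critical values of $f$ in $\overline X$ (finite, $|V|\leqslant m$), $\tilde f=d(f(\cdot),V)$, and $\mathcal K\subset\mathbb R$ is the finite set of points at which the piecewise affine map $d(\cdot,V)$ is not differentiable, i.e.\ the elements of $V$ together with the midpoints of consecutive elements of $V$; thus $|\mathcal K|\leqslant 2m-1$ and consecutive points of $\mathcal K$ differ by at least $\rho:=\min\{(v-v')/2:v,v'\in V,\ v>v'\}$. For $\psi$ I would take the desingularizing function built in the proof of Proposition~\ref{prop:KL}, after shrinking its parameter $\epsilon$ so that $\rho'<\rho$; recall that it is then $C^{1}$, concave, and affine with slope $\geqslant 1/c$ on $[\rho',\infty)$, where $c>0$ is the uniform lower bound on $\|\nabla f(x)\|$ over $x\in\overline X$ with $\tilde f(x)\geqslant\rho'$. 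Let $M$ be a Lipschitz constant of $\nabla f$ on $\mathrm{conv}(B(\overline X,1))$, put $\gamma:=1-M\bar\alpha/2$ and $c':=\gamma/(1+M\bar\alpha)$, and choose $\bar\alpha>0$ so small that $\bar\alpha L<\min\{1,\rho-\rho'\}$, $M\bar\alpha<2$, and $2(c')^{-1}/(2+\epsilon)\leqslant 1$. One may assume $\nabla f(x_k)\neq 0$ for all $k\leqslant K$: otherwise the iterates are eventually constant and, $f$ being nonincreasing along them, the statement follows from the same statement applied to a truncation.

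The first step is easy. Since $\|\nabla f\|\leqslant L$ on $\mathrm{conv}(X)$, each segment $[x_k,x_{k+1}]$ has length $\leqslant\bar\alpha L<1$ and hence lies in $B(\overline X,1)$, so the descent lemma gives $f(x_{k+1})\leqslant f(x_k)-\gamma\alpha_k\|\nabla f(x_k)\|^{2}$; this yields $f(x_0)\geqslant\cdots\geqslant f(x_{K+1})$ and, via $\|x_{k+1}-x_k\|=\alpha_k\|\nabla f(x_k)\|$, the per-step decrease $f(x_k)-f(x_{k+1})\geqslant\gamma\alpha_k\|\nabla f(x_k)\|^{2}$. Next I would call a step $k\leqslant K-1$ \emph{good} if $(f(x_{k+1}),f(x_k))\cap\mathcal K=\emptyset$ and \emph{bad} otherwise, and also declare $k=K$ bad. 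The core is the per-step estimate: \emph{for good $k$}, $\|x_{k+1}-x_k\|\leqslant(c')^{-1}\bigl|\psi(\tilde f(x_k))-\psi(\tilde f(x_{k+1}))\bigr|$. To prove it one observes that for a good step $f(x_k)$ and $f(x_{k+1})$ lie in the closure of a single component $I$ of $\mathbb R\setminus\mathcal K$, on which $d(\cdot,V)=\pm(\cdot-v)$ for one $v\in V$, so $\psi\circ\tilde f$ is monotone along the step; one then combines (i) concavity of $\psi$, (ii) the per-step decrease, and (iii) the inequality $\psi'(\tilde f(y))\|\nabla f(y)\|\geqslant 1$ — which holds by Proposition~\ref{prop:KL} when $f(y)\notin\mathcal K$, and by the slope and gradient bounds when $\tilde f(y)\geqslant\rho'$ — applied at $y=x_k$ or $y=x_{k+1}$ according to the direction of monotonicity, with the ratio $\|\nabla f(x_{k+1})\|/\|\nabla f(x_k)\|\leqslant 1+M\alpha_k$ absorbed into the constant; the case in which one of $f(x_k),f(x_{k+1})$ is a critical value is handled using $\psi(0)=0$.

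Finally I would sum over all steps. Since $f(x_0)>\cdots>f(x_K)$ crosses each point of $\mathcal K$ at most once, the good steps split into at most $2m$ maximal runs, each contained in the closure of one component of $\mathbb R\setminus\mathcal K$ and on each of which $\psi\circ\tilde f$ is monotone; telescoping the per-step estimate over a run and using subadditivity of the concave map $\psi$ with $\psi(0)=0$ bounds its total length by $(c')^{-1}\psi(D_j)$, where $D_j\geqslant 0$ is the drop of $f$ over that run and $\sum_j D_j\leqslant f(x_0)-f(x_K)$. Each point of $\mathcal K$ lies in the open interval of at most one step, so there are at most $2m$ bad steps, each of length $\leqslant L\alpha_k\leqslant L\max_k\alpha_k$. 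Jensen's inequality, together with the fact that $t\mapsto t\,\psi(D/t)$ is nondecreasing for concave $\psi$ with $\psi(0)=0$, then gives
\[
\sum_{k=0}^{K}\|x_{k+1}-x_k\|\ \leqslant\ (c')^{-1}\cdot 2m\cdot\psi\!\left(\frac{f(x_0)-f(x_K)}{2m}\right)+2mL\max_k\alpha_k ,
\]
and dividing by $(2+\epsilon)m$ yields \eqref{eq:length_discrete}.

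The hard part is the bookkeeping at the nonsmooth values $\mathcal K$: a careless count produces more than $2m$ bad steps whenever an iterate value lands exactly on a point of $\mathcal K$. Exact hits on elements of $V$ cost nothing, thanks to $\psi(0)=0$. Exact hits on midpoints are the genuine difficulty, because the Kurdyka-\L{}ojasiewicz inequality of Proposition~\ref{prop:KL} really fails there; the remedy is that near a midpoint one has $\tilde f\geqslant\rho'$, hence $\psi$ affine with slope $\geqslant 1/c$ and $\|\nabla f\|\geqslant c$, and these substitute for the desingularizing inequality in the per-step estimate — this is precisely why $\epsilon$ must be shrunk so that $\rho'<\rho$. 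A related subtlety, with no continuous analogue, is that $\tilde f$ need not be monotone along a run of iterates traversing a full band between two consecutive critical values; breaking the runs at the midpoints restores monotonicity but doubles the number of pieces, which is exactly why the constant in \eqref{eq:length_discrete} is $2m$ rather than $m$.
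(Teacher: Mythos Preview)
Your proposal is correct and follows essentially the same route as the paper: descent lemma for monotonicity, the per-step bound $\|x_{k+1}-x_k\|\leqslant(1+\epsilon/2)\,|\psi(\tilde f(x_k))-\psi(\tilde f(x_{k+1}))|$ obtained by multiplying the descent inequality by $\psi'(\tilde f(x_k))$ or $\psi'(\tilde f(x_{k+1}))$ according to the sign of $\tilde f(x_k)-\tilde f(x_{k+1})$, a partition into at most $2m$ blocks determined by crossings of $\mathcal K$, and Jensen's inequality to combine. The one substantive difference is how exact hits on $\mathcal K$ are handled: the paper avoids them altogether by a limiting argument --- it first proves the estimate when $[f(x_K),f(x_0))\cap\mathcal K=\emptyset$, perturbing the initial point to $x_0^l\to x_0$ with $f(x_0^l)<f(x_0)$ so that the closed interval excludes $\mathcal K$ and then passing to the limit --- whereas you appeal to the explicit affine extension of $\psi$ on $[\rho',\infty)$ from the construction in Proposition~\ref{prop:KL}; the paper's device uses only the abstract conclusion of Proposition~\ref{prop:KL}, while yours ties the argument to that specific $\psi$, but since the paper \emph{defines} a desingularizing function as one arising from that construction, this distinction is harmless.
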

\begin{proof}
Let $M>0$ be a Lipschitz constant of $\nabla f$ with respect to $\|\cdot\|$ on the convex hull of $B(X,1):=X+B(0,1)$ and let $\bar{\alpha} := \min\{L^{-1},2\epsilon(6+\epsilon)^{-1}M^{-1}\}$. Let $K \in \mathbb{N}$, $\alpha_0,\hdots,\alpha_K \in (0,\bar{\alpha}]$, and $(x_0,\hdots,x_{K+1}) \in X \times \cdots \times X \times \mathbb{R}^n$ be such that $x_{k+1} = x_k - \alpha_k \nabla f(x_k)$ for $k=0,\hdots,K$. It holds that $\|x_{K+1}-x_K\| = \alpha_K\|\nabla f(x_K)\| \leqslant \bar{\alpha}L \leqslant 1$ and thus $x_0,\hdots,x_{K+1} \in B(X,1)$. A bound on the Taylor expansion of $f$ yields
\begin{align*}
    f(x_{k+1})-f(x_k) & \leqslant \langle \nabla f(x_k), x_{k+1}-x_k\rangle + \frac{M}{2}\|x_{k+1}-x_k\|^2 \\
    & = \left(\frac{M\alpha_k}{2}-1\right) \|x_{k+1}-x_k\| \|\nabla f(x_k)\|
\end{align*}
for $k=0,\hdots,K$, and thus
\begin{equation}
\label{eq:descent_lemma_1}
    \|x_{k+1}-x_k\|\|\nabla f(x_k)\| \leqslant \frac{2}{2- M\alpha_k}(f(x_k) - f(x_{k+1})).
\end{equation}
We also have
    \begin{align*}
        \|\nabla f(x_{k+1})\| & \leqslant \| \nabla f(x_{k+1}) - \nabla f(x_k) \| + \|\nabla f(x_k)\| \\
        & \leqslant M \| x_{k+1} - x_k \| + \|\nabla f(x_k)\| \\
        & \leqslant (M\alpha_k+1) \|\nabla f(x_k)\|
    \end{align*}
and thus 
\begin{equation}
\label{eq:descent_lemma_2}
    \|x_{k+1}-x_k\|\|\nabla f(x_{k+1})\| \leqslant \frac{2+2M\alpha_k}{2- M\alpha_k}(f(x_k) - f(x_{k+1})).
\end{equation}
Note that 
\begin{equation*}
    1 \leqslant \frac{2}{2- M\alpha_k} \leqslant \frac{2+2M\alpha_k}{2- M\alpha_k} \leqslant 1+\frac{\epsilon}{2}.
\end{equation*}
Let $\tilde{f}$ and $V$ be defined as in Proposition \ref{prop:KL}.

Assume that $[f(x_K),f(x_0)]$ excludes the elements of $V$ and the averages of any two consecutive elements of $V$. Then $0\notin \partial \tilde{f}(x_k)$ and $1 \leqslant \| \nabla(\psi \circ \tilde{f})(x_k)\| = \psi'(\tilde{f}(x_k)) \|\nabla \tilde{f}(x_k)\|$ for $k=0,\hdots,K$. Let $k \in \{0,\hdots,K-1\}$. If $\tilde{f}(x_k) \geqslant \tilde{f}(x_{k+1})$, then multiplying \eqref{eq:descent_lemma_1} by $\psi'(\tilde{f}(x_k))$ and using the concavity of $\psi$, we find that 
\begin{align*}
    \|x_{k+1}-x_k\| & \leqslant (1+\epsilon/2)\psi'(\tilde{f}(x_k))(f(x_k) - f(x_{k+1})) \\
    & = (1+\epsilon/2)\psi'(\tilde{f}(x_k))(\tilde{f}(x_k) - \tilde{f}(x_{k+1})) \\
    & \leqslant (1+\epsilon/2)(\psi (\tilde{f}(x_k)) - \psi (\tilde{f}(x_{k+1}))).
\end{align*}
If $\tilde{f}(x_k) \leqslant \tilde{f}(x_{k+1})$, then multiplying \eqref{eq:descent_lemma_2} by $\psi'(\tilde{f}(x_{k+1}))$ and using the concavity of $\psi$, we find that 
\begin{align*}
    \|x_{k+1}-x_k\| & \leqslant (1+\epsilon/2)\psi'(\tilde{f}(x_{k+1}))(f(x_k) - f(x_{k+1})) \\
    & = (1+\epsilon/2)\psi'(\tilde{f}(x_{k+1}))(\tilde{f}(x_{k+1}) - \tilde{f}(x_k)) \\
    & \leqslant (1+\epsilon/2)(\psi (\tilde{f}(x_{k+1})) - \psi (\tilde{f}(x_k))).
\end{align*}
As a result,
\begin{equation*}
    \|x_{k+1}-x_k\| \leqslant (1+\epsilon/2)|\psi (\tilde{f}(x_k)) - \psi (\tilde{f}(x_{k+1}))|, ~~~ k = 0,\hdots,K-1.
\end{equation*}
We obtain the telescoping sum 
\begin{subequations}
    \label{eq:length_finite} 
    \begin{align}
    \sum_{k=0}^K \|x_{k+1}-x_k\| & \leqslant (1+\epsilon/2) |\psi(\tilde{f}(x_0)) - \psi(\tilde{f}(x_K))| + \|x_{K+1} - x_K\| \\[-4mm]
    & \leqslant (1+\epsilon/2) \psi ( | \tilde{f}(x_0) - \tilde{f}(x_K) |) + \alpha_K\|\nabla f(x_K)\| \\
    & \leqslant (1+\epsilon/2) \psi( f(x_0) - f(x_K)) + L\max_{k=0,\hdots,K} \alpha_k.
    \end{align}
\end{subequations}

Assume that $[f(x_K),f(x_0))$ excludes the elements of $V$ and the averages of any two consecutive elements of $V$. Since we are excluding a finite number of elements, there exists $\epsilon>0$ such that $[f(x_K)-\epsilon,f(x_0))$ satisfies the same property. If $x_0$ is a local minimum of $f$ on the open set $X$, then $\nabla f(x_0) = 0$ by Fermat's rule. The length of $x_0,\hdots,x_{K+1}$ is then equal to zero and the formula obtained in \eqref{eq:length_finite} trivially holds. Otherwise, there exists a sequence $(x_0^l)_{l\in\mathbb{N}}$ in $\mathbb{R}^n$ such that $f(x_0^l) < f(x_0)$ and $x_0^l$ converges to $x_0$. For every $l \in \mathbb{N}$, consider the sequence $(x^l_k)_{k\in \mathbb{N}}$ such that $x^l_{k+1} = x^l_k - \alpha_k \nabla f(x_k^l)$ for all $k \in \mathbb{N}$. Since $\nabla f$ is continuous, for any fixed $k \in \{0,\hdots,K\}$, $x_k^l$ converges to $x_k \in X$ and eventually belongs to the open set $X$. Thus $[f(x^l_K),f(x^l_0)] \subset [f(x_K)-\epsilon,f(x_0))$ eventually excludes the elements of $V$ and the averages of any two consecutive elements of $V$. We can apply the formula obtained in \eqref{eq:length_finite}, namely
\begin{equation*}
    \sum_{k=0}^K \|x^l_{k+1}-x^l_k\| \leqslant (1+\epsilon/2) \psi( f(x^l_0) - f(x^l_K)) + L\max_{k=0,\hdots,K} \alpha_k.
\end{equation*}
Passing to the limit yields
\begin{equation*}
    \sum_{k=0}^K \|x_{k+1}-x_k\| \leqslant (1+\epsilon/2) \psi( f(x_0) - f(x_K)) + L\max_{k=0,\hdots,K} \alpha_k.
\end{equation*}

We next consider the general case where
\begin{equation*}
    [f(x_K),f(x_{K_p+1})) \cup \cdots \cup [f(x_{K_2}),f(x_{K_1+1})) \cup [f(x_{K_1}),f(x_0))
\end{equation*}
excludes the elements of $V$ and the averages of any two consecutive elements of $V$. For notational convenience, let $K_0 := -1$ and $K_{p+1} := K$. Since $p \leqslant 2m-1$, we have
\begin{align*}
    \sum_{k=0}^K \|x_{k+1}-x_k\| = & \sum_{i=0}^p \sum_{k=K_i+1}^{K_{i+1}} \|x_{k+1}-x_k\| \\
    \leqslant & \sum_{i=0}^p \left((1+\epsilon/2)\psi( f(x_{{K_i}+1}) - f(x_{K_{i+1}})) + L\max_{k=0,\hdots,K} \alpha_k\right) \\[-.5mm]
    \leqslant & (1+\epsilon/2)(p+1) ~ \psi \left( \frac{1}{p+1} \sum_{i=0}^p  f(x_{{K_i}+1}) - f(x_{K_{i+1}}) \right) + \\ 
    & (p+1)L\max_{k=0,\hdots,K} \alpha_k \\
    \leqslant & (1+\epsilon/2)(p+1) \psi \left( \frac{f(x_0)-f(x_K)}{p+1} \right) + (p+1)L\max_{k=0,\hdots,K} \alpha_k \\
    \leqslant & (2+\epsilon)m ~ \psi \left( \frac{f(x_0)-f(x_K)}{2m} \right) + 2m L\max_{k=0,\hdots,K} \alpha_k.\qedhere
\end{align*}
\end{proof}

An immediate consequence of Propositions \ref{prop:length_continuous} and \ref{prop:length_discrete} is that bounded continuous subgradient trajectories and bounded discrete gradient trajectories have finite length, as is well known. An easy fact that we will not prove here is that continuous subgradient trajectories of finite length converge to critical points of the objective function under the sole assumption that it is locally Lipschitz. The same holds for discrete subgradient trajectories if the step sizes are not summable.

Propositions \ref{prop:length_continuous} and \ref{prop:length_discrete} shed light on a seemingly unexplored consequence of the Kurdyka-\L{}ojasiewicz inequality. When considering subgradient trajectories in a bounded region, length, not time nor the number of iterations, guarantees a uniform decrease of the objective function. Indeed, since $\psi$ is bijective and strictly increasing, one can compose with $\psi^{-1}$ while preserving the order of the inequalities in \eqref{eq:length_continuous} and \eqref{eq:length_discrete}. This is illustrated in Example \ref{eg:decrease}.
\begin{example}
\label{eg:decrease}
The function $f(x_1,x_2) := x_1^3 - x_1^2x_2$ in Figure \ref{fig:decrease}, whose critical points are denoted in black, admits the desingularizing function $\psi(t) := 3t^{1/3}$ on $\mathbb{R}^2$. Hence for all sufficiently small step sizes, discrete gradient trajectories initialized in $B(0,0.3)$ below the critical value decrease by at least $2\psi^{-1}(1/9) \geqslant 0.0001$ by the time they exit $B(0,0.8)$. $\psi$ is a desingularizing function of $f$ on $\mathbb{R}^2$ because $\|\nabla (\psi \circ | f| )(x)\| \geqslant 1$ for all $(x_1,x_2)\in \mathbb{R}^2$ such that $f(x_1,x_2)\neq 0$, i.e., $x_1 \neq 0$ and $x_1 \neq x_2$. The inequality follows from $\|\nabla f(x_1,x_2)\| = ((3x_1^2 - 2x_1x_2)^2 + x_1^4)^{1/2} \geqslant | x_1^3 - x_1^2x_2|^{2/3}$. Indeed, $((3x_1^2 - 2x_1x_2)^2 + x_1^4)^3 - (x_1^3 - x_1^2x_2)^4 = x_1^6 [((3x_1 - 2x_2)^2 + x_1^2)^3 - x_1^2(x_1 - x_2)^4] = x_1^6 [((x_1 +2x_3)^2 + x_1^2)^3 - x_1^2x_3^4]$ where $x_3 := x_1-x_2$. By the inequality of arithmetic and geometric means, for any $c \in (\sqrt{2}/2,1)$ we have
    \begin{align*}
        (x_1 +2x_3)^2 + x_1^2 & = 2(x_1^2+2x_1x_3+2x_3^2) \\
        & = 2[(cx_1+x_3/c)^2 + (1-c^2)x_1^2 + (2-1/c^2)x_3^2] \\
        & \geqslant 2(1-c^2)x_1^2 + (2-1/c^2)x_3^2 + (2-1/c^2)x_3^2 \\
        & \geqslant 3[2(1-c^2)x_1^2(2-1/c^2)^2x_3^4]^{1/3}.
    \end{align*}
Thus $((x_1 +2x_3)^2 + x_1^2)^3 \geqslant 54(1-c^2)(2-1/c^2)^2 x_1^2x_3^4 = 9x_1^2x_3^4/2 \geqslant x_1^2x_3^4$ by taking $c = \sqrt{2/3}$ in order to obtain the equality.
\begin{figure}[H]
    \centering
    \includegraphics[width=1\linewidth]{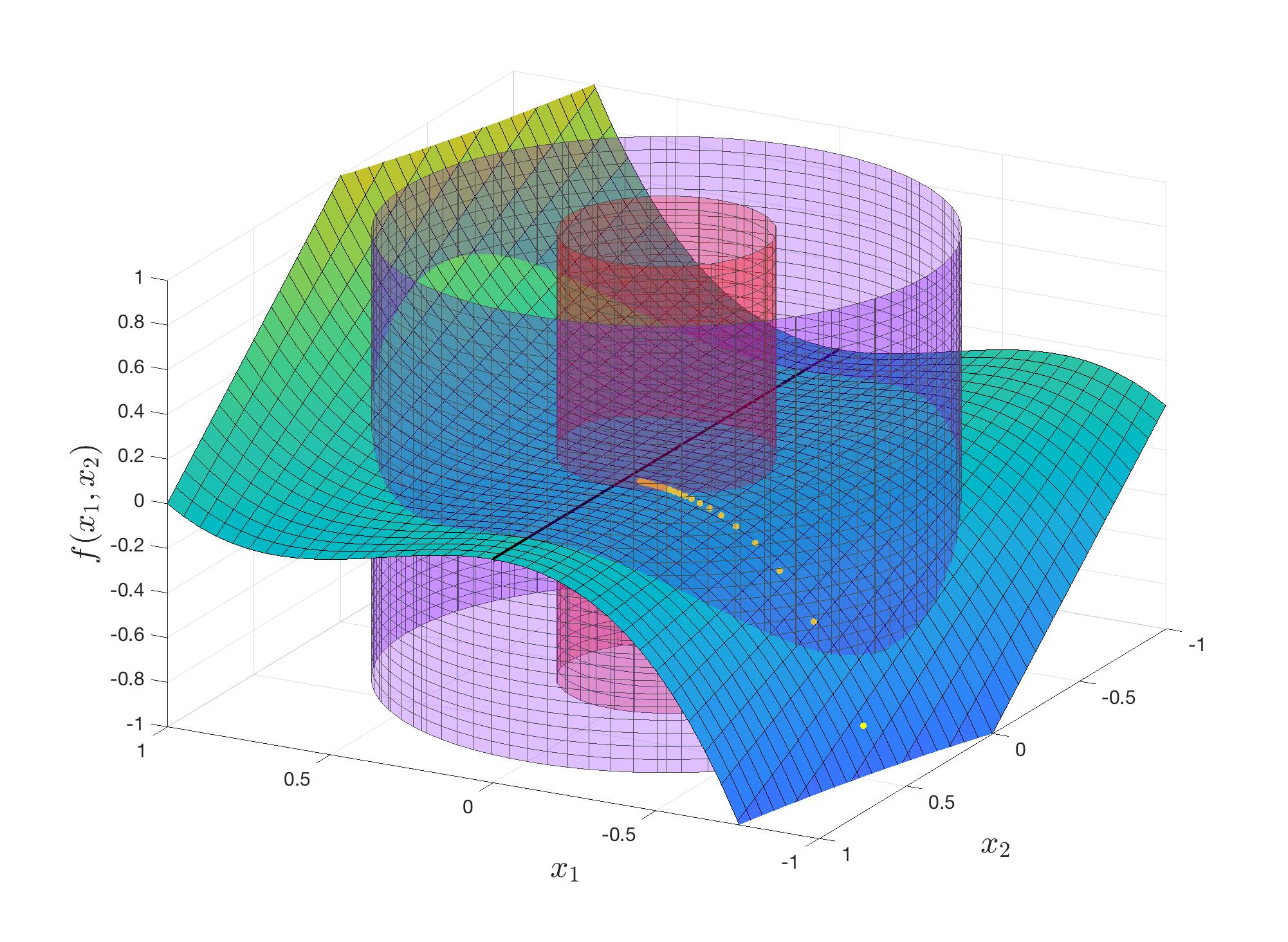}
    \caption{$f(x_1,x_2)=x_1^3 - x_1^2x_2$.}
    \label{fig:decrease}
\end{figure}
\end{example}

\section{Proof of Lemma \ref{lemma:uniform_length}}
\label{sec:Proof of Lemma lemma:uniform_length}
 
Let $f:\mathbb{R}^n \rightarrow \mathbb{R}$ be a locally Lipschitz definable function with bounded continuous subgradient trajectories. Let $X_0$ be a bounded subset of $\mathbb{R}^n$. We will show that $\sigma(X_0) < \infty$ where 
\begin{subequations}
\label{eq:sup_trajectory}
\begin{align}
    \sigma(X_0) := & \sup\limits_{x \in \mathcal{A}(\mathbb{R}_+,\mathbb{R}^n)} ~~ \int_0^{\infty} \|x'(t)\|dt \\
  & ~~ \mathrm{subject~to} ~~~ 
\left\{ 
\begin{array}{l}
x'(t) \in - \partial f(x(t)), ~\mathrm{for~a.e.}~ t > 0,\\[3mm] x(0) \in X_0.
\end{array}
\right.
\end{align}
\end{subequations}
Without loss of generality, we may assume that $X_0\neq \emptyset$. By Propositions \ref{prop:existence_trajectory} and \ref{prop:bounded_global}, the feasible set of \eqref{eq:sup_trajectory} is thus non-empty and $\sigma(X_0)>-\infty$.

We first consider the case with finite time horizon $T\geqslant 0$. By the definable Morse-Sard theorem \cite[Corollary 9]{bolte2007clarke}, $f$ has finitely many critical values. Notice that $\sigma_T(X_0) \leqslant \sqrt{T(\sup_{X_0} f - m(f))}$ where $m(f)$ is the smallest critical value of $f$ and 
\begin{subequations}
\label{eq:sup_trajectory_T}
\begin{align}
    \sigma_T(X_0) := & \sup\limits_{x \in \mathcal{A}(\mathbb{R}_+,\mathbb{R}^n)} ~~ \int_0^T \|x'(t)\|dt \\
  & ~~ \mathrm{subject~to} ~~~ 
\left\{ 
\begin{array}{l}
x'(t) \in - \partial f(x(t)), ~\mathrm{for~a.e.}~ t > 0,\\[3mm] x(0) \in X_0.
\end{array}
\right.
\end{align}
\end{subequations}
Indeed, by the Cauchy-Schwarz inequality and Propositions \ref{prop:chain_rule} and \ref{prop:length_continuous}, for any feasible point $x(\cdot)$ of \eqref{eq:sup_trajectory_T} we have
\begin{subequations}
\begin{align}
    \int_0^T \|x'(t)\|dt & \leqslant \sqrt{T} \sqrt{ \int_0^T \|x'(t)\|^2 dt} \\
    & \leqslant \sqrt{T} \sqrt{ \int_0^{\infty} \|x'(t)\|^2 dt} \\
    & = \sqrt{T} \sqrt{f(x(0)) - f\left(\lim_{t\rightarrow\infty} x(t)\right)} \\
    & \leqslant \sqrt{T\left(\sup_{X_0} f - m(f)\right)}. \label{eq:finite_time}
\end{align}
\end{subequations}

We next treat the case with infinite time horizon. Consider a sequence of feasible points $x_0(\cdot), x_1(\cdot), x_2(\cdot), \ldots$ of \eqref{eq:sup_trajectory}  such that $\int_0^{\infty} \|x_k'(t)\|dt$ converges to $\sigma(X_0)$. We proceed to show that the sequence is equicontinuous. Let $\epsilon>0$ and $t\geqslant 0$. Consider problem \eqref{eq:sup_trajectory_T} with finite time horizon $T:=t+\epsilon$. Since $x_0(\cdot),x_1(\cdot),x_2(\cdot),\ldots$ are feasible points of \eqref{eq:sup_trajectory_T}, for all $s \in [0,T]$ and $k=0,1,2,\ldots$ we have
\begin{subequations}
    \begin{align}
        \|x_k(s)\| & \leqslant \|x_k(s) - x_k(0) \| + \|x_k(0)\| \label{upper_0} \\ & \leqslant \int_0^s \|x_k'(\tau)\|d\tau + \|x_k(0)\| \label{upper_1} \\
        & \leqslant \sigma_T(X_0) + \sup_{x_0 \in X_0} \|x_0\|. \label{upper_2}
    \end{align}
\end{subequations}
All the trajectories $x_0(\cdot),x_1(\cdot),x_2(\cdot),\ldots$ hence belong to a common ball centered at the origin up to time $T$. Let $L>0$ be a Lipschitz constant of $f$ on that ball. As a result, for all $s \in [t-\delta,t+\delta]$ with $\delta:=\epsilon/(2L)$ and for all $k\in \mathbb{N}$, we have
\begin{equation*}
    \|x_k(t)-x_k(s)\| = \left\| \int_{s}^t x'_k(\tau)d\tau \right\| \leqslant \int_{t-\delta}^{t+\delta} \|x'_k(\tau)\| d\tau \leqslant 2\delta L = \epsilon,
\end{equation*}
It follows that $x_0(\cdot),x_1(\cdot),x_2(\cdot),\ldots$ is equicontinuous on $\mathbb{R}_+$. In addition, \eqref{upper_0}-\eqref{upper_2} imply that, for all $t\geqslant 0$, the sequence $x_0(t),x_1(t),x_2(t),\ldots$ is bounded. The Arzel\`a-Ascoli theorem \cite[Theorem 1 p. 13]{aubin1984differential} implies that there exists a subsequence (again denoted $x_k(\cdot)$) converging uniformly over compact intervals to a continuous function $x:\mathbb{R}_+\rightarrow\mathbb{R}^n$. 

We next show that $x(\cdot)$ is a continuous subgradient trajectory of $f$. Let $T\geqslant 0$. By virtue of \eqref{upper_0}-\eqref{upper_2} and local Lipschitz continuity of $f$, the restrictions of $x'_0(\cdot),x'_1(\cdot),x'_2(\cdot),\ldots$ to $[0,T]$ lie in a ball of the dual space of $L^1([0,T],\mathbb{R}^n)$, namely $L^{\infty}([0,T],\mathbb{R}^n)$. By the Banach-Alaoglu theorem \cite[Theorem 3 p. 13]{aubin1984differential}, there exists a subsequence (again denoted $x_k'(\cdot)$) that converges weakly\textsuperscript{$\ast$} to a function $y(\cdot)$ in $L^{\infty}([0,T],\mathbb{R}^n)$. Together with $L^{\infty}([0,T],\mathbb{R}^n)\subset L^1([0,T],\mathbb{R}^n)$, we find that $x_k'(\cdot)$ converges weakly to $y(\cdot)$ in $L^1([0,T],\mathbb{R}^n)$. Since $x_k(t)-x_k(s) = \int_s^t x_k'(\tau)d\tau$ for all $0\leqslant s \leqslant t \leqslant T$, we have $x(t)-x(s) = \int_s^t y(\tau)d\tau$. As a result, $x'(t) = y(t)$ for almost every $t\in (0,T)$. According to \cite[Convergence Theorem p. 60]{aubin1984differential}, it follows that $x'(t) \in -\partial f(x(t))$ for almost every $t\in(0,T)$. As $T \geqslant 0$ was arbitrary, we have $x'(t) \in -\partial f(x(t))$ for almost every $t>0$.


Since $f$ is definable and has bounded continuous subgradient trajectories, by Proposition \ref{prop:length_continuous} $x(\cdot)$ has finite length and converges to a critical point $x^*$ of $f$. Let $\epsilon>0$ and let $m \in \mathbb{N}^*$ be the number of critical values of $f$ in $B(\overline{x(\mathbb{R}_+)},\epsilon)$. Let $\psi$ be a desingularizing function of $f$ on $B(\overline{x(\mathbb{R}_+)},\epsilon)$. Since $f$ is continuous, there exists $\delta \in (0,\epsilon/2)$ such that 
\begin{equation}
\label{eq:f_bound_continuous}
    f(x) - f(x^*) \leqslant m~\psi^{-1}\left(\frac{\epsilon}{4m}\right),~~~ \forall x\in B(x^*,\delta).
\end{equation}

Let $t^*\geqslant 0$ be such that $\|x(t)-x^*\| \leqslant \delta/2$ for all $t\geqslant t^*$. By the uniform convergence of $x_k(\cdot)$, we have that $\|x_k(t)-x(t)\|\leqslant \delta/2$ for all $t\in [0,t^*]$ for all $k$ large enough. Hence $\|x_k(t^*) - x^*\|\leqslant \|x_k(t^*)-x(t^*)\|+\|x(t^*)-x^*\| \leqslant \delta/2 + \delta/2 = \delta$. If $T_k := \inf\{ t \geqslant t^* : x_k(t) \notin \mathring{B}(x^*,\epsilon)\}<\infty$, then
\begin{subequations}
    \begin{align}
        2m~\psi^{-1}\left(\frac{\epsilon}{4m}\right) & \leqslant 2m~ \psi^{-1}\left(\frac{\epsilon-\delta}{2m}\right) \label{eq:decrease_continuous_a} \\
        & \leqslant 2m~ \psi^{-1}\left(\frac{\|x_k(T_k)-x^*\| - \|x_k(t^*)-x^*\|}{2m}\right) \label{eq:decrease_continuous_b} \\
        & \leqslant 2m~ \psi^{-1}\left(\frac{\|x_k(T_k)-x_k(t^*)\|}{2m}\right) \label{eq:decrease_continuous_c} \\
        & \leqslant 2m~ \psi^{-1}\left(\frac{1}{2m}\int_{t^*}^{T_k}\|x_k'(t)\|dt \right) \label{eq:decrease_continuous_d} \\
        & \leqslant f(x_k(t^*))-f(x_k(T_k)) \label{eq:decrease_continuous_e} \\[3mm]
        & \leqslant m~\psi^{-1}\left(\frac{\epsilon}{4m}\right) + f(x^*) - f(x_k(T_k)). \label{eq:decrease_continuous_f}
    \end{align}
\end{subequations}
Above, \eqref{eq:decrease_continuous_a} through \eqref{eq:decrease_continuous_d} rely on the fact that $\psi^{-1}$ is increasing. \eqref{eq:decrease_continuous_a} is due to $\delta < \epsilon/2$. \eqref{eq:decrease_continuous_b} holds because $\|x_k(T_k)-x^*\|\geqslant \epsilon$ by continuity of $x_k(\cdot)$ and $x_k(t^*) \in B(x^*,\delta)$. \eqref{eq:decrease_continuous_c} is a consequence of the triangular inequality. \eqref{eq:decrease_continuous_d} holds because $x_k(\cdot)$ is absolutely continuous. \eqref{eq:decrease_continuous_e} is due to Proposition \ref{prop:length_continuous} and the fact that $x_k(t) \in B(x^*,\epsilon)$ for all $t \in [t^*,T_k]$ by continuity of $x_k(\cdot)$. Finally, \eqref{eq:decrease_continuous_f} is due to $x_k(t^*)\in B(x^*,\delta)$ and \eqref{eq:f_bound_continuous}. Since $x_k(t) \in B(\overline{x(\mathbb{R}_+)},\epsilon)$ for all $t\in [0,T_k]$ and $x_k(T_k)$ belongs to
\begin{equation*}
     X_1 := B(x^*,\epsilon) \bigcap \left\{ x \in \mathbb{R}^n : f(x) \leqslant f(x^*) - m~\psi^{-1}\left(\frac{\epsilon}{4m}\right) \right\},
\end{equation*}
by Proposition \ref{prop:length_continuous} and the definition of $\sigma(\cdot)$ in \eqref{eq:sup_trajectory} we have 
    \begin{align*}
            \int_0^{\infty} \|x_k'(t)\|dt & = \int_{0}^{T_k} \|x_k'(t)\|dt + \int_{T_k}^{\infty} \|x_k'(t)\|dt \\
            & \leqslant 2m~\psi\left( \frac{1}{2m} \left(\sup_{X_0} f - \min_{B(x^*,\epsilon)} f\right) \right) + \max\{0,\sigma(X_1)\}.
    \end{align*}
Note that the inequality still holds if $T_k = \infty$. By taking the limit, we get
\begin{equation*}
    \sigma(X_0) ~\leqslant~ 2m~\psi\left( \frac{1}{2m} \left(\sup_{X_0} f - \min_{B(x^*,\epsilon)} f\right) \right) + \max\{0,\sigma(X_1)\}.
\end{equation*}

It now suffices to replace $X_0$ by $X_1$ and repeat the proof starting below \eqref{eq:finite_time}. A maximizing sequence $\bar{x}_k(\cdot)$ corresponding to $\sigma(X_1)$ converges to a continuous subgradient trajectory $\bar{x}(\cdot)$ whose initial point lies in the compact set $X_1$. If $X_1 \neq \emptyset$, then the critical value $f(\lim_{t\rightarrow\infty} \bar{x}(t))$ is less than or equal to $f(x^*) - m\psi^{-1}(\epsilon/(4m)) < f(x^*)$. By the definable Morse-Sard theorem \cite[Corollary 9]{bolte2007clarke}, $f$ has finitely many critical values. Thus, it is eventually the case that one of the sets $X_0,X_1,\hdots$ is empty. We conclude that $\sigma(X_0)<\infty$ by the above recursive formula. 

\section{Proof of Theorem \ref{thm:convergence}}
\label{sec:Proof of Theorem thm:convergence}

($1\Longrightarrow 2$) We prove the contrapositive. Assume that there exists $0< T \leqslant \infty$ and a differentiable function $x:[0,T)\rightarrow\mathbb{R}^n$ such that
\begin{equation*}
    x'(t) = -\nabla f(x(t)), ~~~ \forall t \in (0,T),
\end{equation*}
for which, for all $c>0$, there exists $t\in [0,T)$ such that $\|x(t)\| > c$. Let $x_0 := x(0)$. Let $\bar{\alpha}$ and $c$ be some positive constants. Let $\tilde{t} \in (0,T)$ be such that $\|x(\tilde{t})\| \geqslant c+2$. By Proposition \ref{prop:tracking} and Remark \ref{rem:tracking}, there exists $\tilde{\alpha}\in (0,\bar{\alpha}]$ such that for all $\alpha \in (0,\tilde{\alpha}]$, the sequence $x_0,x_1,x_2,\hdots \in \mathbb{R}^n$ defined by $x_{k+1} = x_k -\alpha \nabla f(x_k)$ for all $k \in \mathbb{N}$ satisfies $\|x_k-x(k\alpha)\| \leqslant 1/2$ for all $k\in \{0,\hdots, \lfloor \tilde{t}/\alpha \rfloor \}$. We use the notation $\lfloor t \rfloor$ to denote the floor of a real number $t$ which is the unique integer such that $\lfloor t \rfloor \leqslant t < \lfloor t \rfloor + 1$. If $\tilde{k} = \lfloor \tilde{t}/\alpha \rfloor$, then $\|x_{\tilde{k}} - x(\tilde{t})\| \leqslant \|x_{\tilde{k}}-x(\tilde{k}\alpha)\|+\|x(\tilde{k}\alpha)-x(\tilde{t})\| \leqslant 1/2 + \tilde{L}| \tilde{k}\alpha-\tilde{t}| \leqslant 1/2 + \tilde{L}\alpha$ where $\tilde{L}>0$ is a Lipschitz constant of $f$ on the convex hull of $x([\tilde{t}-\tilde{\alpha},\tilde{t}])$. Let $\alpha_0,\alpha_1,\alpha_2,\hdots \in (0,\bar{\alpha}]$ be the constant sequence equal to $\min \{\tilde{\alpha}, 1/(2\tilde{L})\}$ and consider the sequence $x_0,x_1,x_2,\hdots \in \mathbb{R}^n$ defined by $x_{k+1} = x_k -\alpha_k \nabla f(x_k)$ for all $k \in \mathbb{N}$. By the triangular inequality, we have $\|x_{\tilde{k}}\| \geqslant \|x(\tilde{t})\| - \|x_{\tilde{k}}-x(\tilde{t})\| \geqslant c+2 - 1/2 - 1/2>c$ where $\tilde{k} := \lfloor \tilde{t}/\alpha \rfloor$.

($2 \Longrightarrow 3$) Let $X_0$ be a bounded subset of $\mathbb{R}^n$. We will show that there exists $\bar{\alpha}>0$ such that $\sigma(X_0,\bar{\alpha})<\infty$ where
\begin{subequations}
\label{eq:sup_discrete_X_0}
\begin{align}
    \sigma(X_0,\bar{\alpha}) := & \sup\limits_{\tiny \begin{array}{c}x \in (\mathbb{R}^n)^{\mathbb{N}}\\ \alpha \in (0,\bar{\alpha}]^\mathbb{N} \end{array}} ~~ \sum\limits_{k=0}^{\infty} \|x_{k+1}-x_k\| \\
   & ~~ \mathrm{subject~to} ~~~ 
\left\{ 
\begin{array}{l}
x_{k+1} = x_k - \alpha_k \nabla f(x_k), ~ \forall k\in \mathbb{N},\\[2mm] x_0 \in X_0.
\end{array}
\right.
\end{align}
\end{subequations}

Let $\Phi:\mathbb{R}_+ \times \mathbb{R}^n \rightarrow \mathbb{R}^n$ be the gradient flow of $f$ defined for all $(t,x_0) \in \mathbb{R}_+ \times \mathbb{R}^n$ by $\Phi(t,x_0) := x(t)$ where $x(\cdot)$ is the unique continuous gradient trajectory of $f$ initialized at $x_0$. Uniqueness follows from the Picard–Lindel{\"o}f theorem \cite[Theorem 3.1 p. 12]{coddington1955theory}. Let $C$ be the set of critical points of $f$ in $\overline{\Phi(\mathbb{R}_+,X_0)}$. If $X_0 \subset C$, then we have $\sigma(X_0,\bar{\alpha}) \leqslant 0$ for all $\bar{\alpha}>0$ and there is nothing left to prove. Otherwise, since $C$ is closed by \cite[2.1.5 Proposition p. 29]{clarke1990}, there exists $\epsilon>0$ be such that $X_0\setminus \mathring{B}(C,\epsilon/6) \neq \emptyset$ where $\mathring{B}(C,\epsilon/6) := C + \mathring{B}(0,\epsilon/6)$. It is safe to assume this from now on.

Let $\psi$ be a desingularizing function of $f$ on $\mathring{B}(\overline{\Phi}_0,\epsilon)$, where we use the shorthand $\Phi_0 := \Phi(\mathbb{R}_+,X_0)$. By the definable Morse-Sard theorem \cite[Corollary 9]{bolte2007clarke}, $f$ has finitely many critical values. Let $m \in \mathbb{N}$ be the number of critical values of $f$ in $B(\overline{\Phi}_0,\epsilon)$. Note that $m \geqslant 1$ due to Proposition \ref{prop:length_continuous} and the fact that $X_0 \neq \emptyset$. Since $f$ is continuous and $C$ is compact by Lemma \ref{lemma:uniform_length}, there exists $\delta \in (0,\epsilon/2)$ such that  
\begin{equation}
    \label{eq:f_bound_discrete}
    f(x) - \max_{C} f \leqslant m~ \psi^{-1}\left(\frac{\epsilon}{40m}\right),~~~ \forall x\in B(C,\delta).
\end{equation}
Note that $\mathring{B}(\overline{\Phi}_0,\epsilon)$ is bounded due to Lemma \ref{lemma:uniform_length}. Let $L>0$ be a Lipschitz constant of $f$ on the convex hull of $\mathring{B}(\overline{\Phi}_0,\epsilon)$. By Proposition \ref{prop:length_discrete}, there exists $\alpha_\delta \in (0,\delta/(5mL)]$ such that, for all $K \in \mathbb{N}^*$, $\alpha_0,\hdots,\alpha_{K-1} \in (0,\alpha_\delta]$, and $(x_0,\hdots,x_K) \in \mathring{B}(\overline{\Phi}_0,\epsilon) \times \cdots \times \mathring{B}(\overline{\Phi}_0,\epsilon) \times \mathbb{R}^n$ such that $x_{k+1} = x_k - \alpha_k \nabla f(x_k)$ for $k = 0,\hdots,K-1$, we have
\begin{equation}
\label{eq:length_discrete_K}
    \frac{1}{4m}\sum_{k=0}^{K-1}\|x_{k+1}-x_k\| ~\leqslant~ \psi\left(\frac{f(x_0)-f(x_{K-1})}{2m}\right) + \frac{L}{2}\max_{k=0,\hdots,K-1} \alpha_k.
\end{equation}

Since $\nabla f$ is continuous, its norm attains its infimum $\nu$ on the nonempty compact set $\overline{\Phi}_0 \setminus \mathring{B}(C,\delta/3)$. 
It is nonempty because $\overline{\Phi}_0 \setminus \mathring{B}(C,\delta/3) \supset X_0\setminus \mathring{B}(C,\epsilon/6) \neq \emptyset$ and $\delta < \epsilon/2$.
If $\nu=0$, then there exists $x^* \in \overline{\Phi}_0 \setminus \mathring{B}(C,\delta/3)$ such that $\|\nabla f(x^*)\| = 0$. Then $x^*\in C \setminus \mathring{B}(C,\delta/3)$, which is a contradiction. We thus have $\nu>0$. Hence we may define $T := 2\sigma(X_0)/\nu$ where $\sigma(X_0)$ is defined in \eqref{eq:sup_trajectory} and is finite by Lemma \ref{lemma:uniform_length}. Note that $\sigma(X_0)>0$ and thus $T>0$ because $X_0\not\subset C$. By Proposition \ref{prop:tracking}, there exists $\bar{\alpha} \in (0,\alpha_\delta]$ such that for any feasible point $((x_k)_{k\in \mathbb{N}} , (\alpha_k)_{k\in\mathbb{N}})$ of \eqref{eq:sup_discrete_X_0},
the continuous gradient trajectory $x:\mathbb{R}_+\rightarrow \mathbb{R}^n$ of $f$ initialized at $x_0$ satisfies
\begin{equation}
\label{eq:tracking_T}
   \forall k\in \mathbb{N}^*,~~~ \alpha_0+\cdots+\alpha_{k-1} \leqslant T ~~~\Longrightarrow~~~ \|x_k - x(\alpha_0+\cdots+\alpha_{k-1})\| \leqslant \frac{\delta}{3}.
\end{equation}
Now suppose that $\|x'(t)\| \geqslant 2\sigma(X_0)/T$ for all $t\in (0,T)$. Then we obtain the following contradiction
\begin{equation*}
     \sigma(X_0) < T \frac{2\sigma(X_0)}{T} \leqslant \int_0^{T} \|x'(t)\|dt \leqslant \int_0^{\infty} \|x'(t)\|dt \leqslant \sigma(X_0).
\end{equation*}
Hence, there exists $t^* \in (0,T)$ such that $\|x'(t^*)\| = \|\nabla f(x(t^*))\| < 2\sigma(X_0)/T = 2\sigma(X_0)/(2\sigma(X_0)/\nu) = \nu$. Since  $x(t^*) \in \Phi(\mathbb{R}_+,X_0) \subset \overline{\Phi}_0$ and the infimum of the norm of $\nabla f$ on $\overline{\Phi}_0 \setminus \mathring{B}(C,\delta/3)$ is equal to $\nu$, it must be that $x(t^*) \in \mathring{B}(C,\delta/3)$. Hence there exists $x^* \in C$ such that $\|x(t^*)-x^*\|\leqslant \delta/3$. 

If $\sum_{k=0}^{\infty} \alpha_k \leqslant T$, then by \eqref{eq:tracking_T} we have $x_k \in B(\overline{\Phi}_0,\delta/3)$ for all $k \in \mathbb{N}$. Otherwise, since $\alpha_k \leqslant \bar{\alpha} \leqslant \alpha_\delta \leqslant \delta/(5mL) \leqslant \delta/(3L)$ for all $k \in \mathbb{N}$, there exists $k^* \in \mathbb{N}$ such that $t_{k^*} := \sum_{k=0}^{k^*-1} \alpha_k \in [ t^* - \delta/(3L) , t^*]$, where $\sum_{k=0}^{k^*-1} \alpha_k = 0$ if $k^* = 0$. Thus $\|x_{k^*}-x^*\| \leqslant \|x_{k^*}-x(t_{k^*})\| + \|x(t_{k^*}) - x(t^*)\| + \|x(t^*)-x^*\| \leqslant \delta/3 + L | t_{k^*}-t^* | + \delta/3 \leqslant \delta/3+\delta/3+\delta/3 = \delta$.
If $K := \inf\{ k \geqslant k^* : x_k \notin \mathring{B}(C,\epsilon)\}<\infty$, then
\begin{subequations}
    \begin{align}
        2m~\psi^{-1}\left(\frac{\epsilon}{40m}\right) & = 2m~ \psi^{-1}\left(\frac{1}{8m} \epsilon - \frac{L}{2}\frac{\epsilon}{5mL} \right) \label{eq:decrease_discrete_a} \\
        & \leqslant 2m~ \psi^{-1}\left(\frac{1}{4m}(\epsilon-\delta) - \frac{L\alpha_\delta}{2} \right) \label{eq:decrease_discrete_b} \\
        & \leqslant 2m~ \psi^{-1}\left(\frac{1}{4m} \left(\|x_K-x^*\|-\|x_{k^*}-x^*\|\right) - \frac{L\alpha_\delta}{2} \right) \label{eq:decrease_discrete_c} \\
        & \leqslant 2m~ \psi^{-1}\left(\frac{1}{4m} \|x_K-x_{k^*}\| - \frac{L\bar{\alpha}}{2} \right) \label{eq:decrease_discrete_d} \\
        & \leqslant 2m~ \psi^{-1}\left(\frac{1}{4m}\sum_{k=k^*}^{K-1} \|x_{k+1}-x_k\| - \frac{L}{2}\max_{k=0,\hdots,K-1} \alpha_k \right) \label{eq:decrease_discrete_e} \\
        & \leqslant f(x_{k^*}) - f(x_{K-1}) \label{eq:decrease_discrete_f} \\
        & \leqslant m~\psi^{-1}\left(\frac{\epsilon}{40m}\right) + \max_C f - f(x_{K-1}). \label{eq:decrease_discrete_g}
    \end{align}
\end{subequations}
Above, the arguments of $\psi^{-1}$ in \eqref{eq:decrease_discrete_a} are equal. \eqref{eq:decrease_discrete_b} through \eqref{eq:decrease_discrete_e} rely on the fact that $\psi^{-1}$ is an increasing function. \eqref{eq:decrease_discrete_b} is due to $\delta<\epsilon/2$. \eqref{eq:decrease_discrete_c} holds because $x_K \notin \mathring{B}(C,\epsilon)$, $x^* \in C$, and $x_{k^*} \in B(x^*,\delta)$. \eqref{eq:decrease_discrete_d} and \eqref{eq:decrease_discrete_e} are consequences of the triangular inequality. \eqref{eq:decrease_discrete_f} is due to the length formula \eqref{eq:length_discrete_K} and the fact that $x_{k^*},\hdots,x_{K-1} \in \mathring{B}(C,\epsilon) \subset \mathring{B}(\overline{\Phi}_0,\epsilon)$. Finally, \eqref{eq:decrease_discrete_g} is due to $x_{k^*}\in B(C,\delta)$ and \eqref{eq:f_bound_discrete}. We remark that $K\geqslant k^*+2$ since $\|x_{k^*+1}-x^*\| \leqslant \|x_{k^*+1}-x_{k^*}\| +\|x_{k^*}-x^*\| \leqslant \alpha_{k^*}\|\nabla f(x_{k^*})\|+\delta \leqslant (\delta/(5mL))L+\delta < \epsilon$. Since $x_0,\hdots,x_{K-2} \in \mathring{B}(\overline{\Phi}_0,\epsilon)$ and $x_{K-1}$ belongs to 
\begin{equation*}
    X_1 := B(C,\epsilon) \bigcap \left\{ x \in \mathbb{R}^n : f(x) \leqslant \max_{C} f - m~\psi^{-1}\left(\frac{\epsilon}{40m}\right) \right\},    
\end{equation*}
by the length formula \eqref{eq:length_discrete_K} and the definition of $\sigma(\cdot,\cdot)$ in \eqref{eq:sup_discrete_X_0} we have
    \begin{align*}
            \sum_{k=0}^\infty \|x_{k+1}-x_k\| & = \sum_{k=0}^{K-2} \|x_{k+1}-x_k\| + \sum_{k=K-1}^\infty \|x_{k+1}-x_k\| \\
            & \leqslant 4m~\psi\left( \frac{1}{2m} \left(\sup_{X_0} f - \min_{B(\overline{\Phi}_0,\epsilon)} f\right) \right) + 2mL\bar{\alpha} + \max\{0,\sigma(X_1,\bar{\alpha})\}.
    \end{align*}
Note that the inequality still holds if $K = \infty$ or $\sum_{k=0}^{\infty} \alpha_k \leqslant T$. Hence
\begin{equation*}
    \sigma(X_0,\bar{\alpha}) ~\leqslant~ 4m~\psi\left( \frac{1}{2m} \left(\sup_{X_0} f - \min_{B(\overline{\Phi}_0,\epsilon)} f\right) \right) + 2mL\bar{\alpha} + \max\{0,\sigma(X_1,\bar{\alpha})\}.
\end{equation*}

It now suffices to replace $X_0$ by $X_1$ and repeat the proof starting below \eqref{eq:sup_discrete_X_0}. Since $f(\Phi(t,x_1)) \leqslant f(\Phi(0,x_1)) \leqslant \max_{C} f - m\psi^{-1}(\epsilon/(40m))< \max_{C} f$ for all $t\geqslant 0$ and $x_1\in X_1$, the maximal critical value of $f$ in $\overline{\Phi(\mathbb{R}_+,X_1)}$ is less than the maximal critical value of $f$ in $\overline{\Phi(\mathbb{R}_+,X_0)}$. By the definable Morse-Sard theorem \cite[Corollary 9]{bolte2007clarke}, $f$ has finitely many critical values. Thus, it is eventually the case that one of the sets $X_0,X_1,\hdots$ is empty. In order to conclude, one simply needs to choose an upper bound on the step sizes $\hat{\alpha}$ corresponding to $X_1$ that is less than or equal to the upper bound $\bar{\alpha}$ used for $X_0$. $\sigma(X_0,\cdot)$ is finite when evaluated at the last upper bound thus obtained. Indeed, the above recursive formula still holds if we replace $\bar{\alpha}$ by any $\alpha \in (0,\bar{\alpha}]$. In particular, we may take $\alpha := \hat{\alpha}$.

($3 \Longrightarrow 1$) Obvious.\\

\noindent\textbf{Acknowledgements} I am grateful to the reviewers and editors for their precious time and valuable feedback. Many thanks to Lexiao Lai and Xiaopeng Li for fruitful discussions. 

\bibliographystyle{abbrv}    
\bibliography{mybib}

\begin{thebibliography}{10}

\bibitem{absil2005convergence}
P.-A. Absil, R.~Mahony, and B.~Andrews.
\newblock Convergence of the iterates of descent methods for analytic cost
  functions.
\newblock {\em SIAM Journal on Optimization}, 16(2):531--547, 2005.

\bibitem{akaike1959successive}
H.~Akaike.
\newblock On a successive transformation of probability distribution and its
  application to the analysis of the optimum gradient method.
\newblock {\em Annals of the Institute of Statistical Mathematics},
  11(1):1--16, 1959.

\bibitem{armijo1966minimization}
L.~Armijo.
\newblock {Minimization of functions having Lipschitz continuous first partial
  derivatives}.
\newblock {\em Pacific Journal of mathematics}, 16(1):1--3, 1966.

\bibitem{arora2018optimization}
S.~Arora, N.~Cohen, and E.~Hazan.
\newblock On the optimization of deep networks: Implicit acceleration by
  overparameterization.
\newblock In {\em ICML}, pages 244--253. PMLR, 2018.

\bibitem{arora2019implicit}
S.~Arora, N.~Cohen, W.~Hu, and Y.~Luo.
\newblock Implicit regularization in deep matrix factorization.
\newblock In {\em NeurIPS}, pages 7413--7424, 2019.

\bibitem{attouch2010proximal}
H.~Attouch, J.~Bolte, P.~Redont, and A.~Soubeyran.
\newblock Proximal alternating minimization and projection methods for
  nonconvex problems: An approach based on the kurdyka-{\l}ojasiewicz
  inequality.
\newblock {\em Mathematics of operations research}, 35(2):438--457, 2010.

\bibitem{attouch2013convergence}
H.~Attouch, J.~Bolte, and B.~F. Svaiter.
\newblock {Convergence of descent methods for semi-algebraic and tame problems:
  proximal algorithms, forward--backward splitting, and regularized
  Gauss--Seidel methods}.
\newblock {\em Mathematical Programming}, 137(1):91--129, 2013.

\bibitem{attouch2014variational}
H.~Attouch, G.~Buttazzo, and G.~Michaille.
\newblock {\em Variational analysis in Sobolev and BV spaces: applications to
  PDEs and optimization}.
\newblock SIAM, 2014.

\bibitem{aubin1984differential}
J.-P. Aubin and A.~Cellina.
\newblock {\em Differential inclusions: set-valued maps and viability theory},
  volume 264.
\newblock Springer-Verlag, 1984.

\bibitem{bah2022learning}
B.~Bah, H.~Rauhut, U.~Terstiege, and M.~Westdickenberg.
\newblock Learning deep linear neural networks: Riemannian gradient flows and
  convergence to global minimizers.
\newblock {\em Information and Inference: A Journal of the IMA},
  11(1):307--353, 2022.

\bibitem{baillon1978exemple}
J.~Baillon.
\newblock Un exemple concernant le comportement asymptotique de la solution du
  probl{\`e}me $du/dt + \partial \varphi(u) \ni 0$.
\newblock {\em Journal of Functional Analysis}, 28(3):369--376, 1978.

\bibitem{baldi1989neural}
P.~Baldi and K.~Hornik.
\newblock Neural networks and principal component analysis: Learning from
  examples without local minima.
\newblock {\em Neural networks}, 2(1):53--58, 1989.

\bibitem{bauschke2017descent}
H.~H. Bauschke, J.~Bolte, and M.~Teboulle.
\newblock {A descent lemma beyond Lipschitz gradient continuity: first-order
  methods revisited and applications}.
\newblock {\em Mathematics of Operations Research}, 42(2):330--348, 2017.

\bibitem{bellman1957}
R.~E. Bellman.
\newblock {\em Dynamic Programming}.
\newblock Princeton University Press, 1957.

\bibitem{bochnak2013real}
J.~Bochnak, M.~Coste, and M.-F. Roy.
\newblock {\em Real algebraic geometry}, volume~36.
\newblock Springer Science \& Business Media, 2013.

\bibitem{bohm2020ubiquitous}
A.~B{\"o}hm and A.~Daniilidis.
\newblock Ubiquitous algorithms in convex optimization generate self-contracted
  sequences.
\newblock {\em Journal of Convex Analysis}, 29:119--128, 2022.

\bibitem{bolte2007lojasiewicz}
J.~Bolte, A.~Daniilidis, and A.~Lewis.
\newblock {The {\L}ojasiewicz inequality for nonsmooth subanalytic functions
  with applications to subgradient dynamical systems}.
\newblock {\em SIAM Journal on Optimization}, 17(4):1205--1223, 2007.

\bibitem{bolte2007clarke}
J.~Bolte, A.~Daniilidis, A.~Lewis, and M.~Shiota.
\newblock Clarke subgradients of stratifiable functions.
\newblock {\em SIAM Journal on Optimization}, 18(2):556--572, 2007.

\bibitem{bolte2010characterizations}
J.~Bolte, A.~Daniilidis, O.~Ley, and L.~Mazet.
\newblock {Characterizations of {\L}ojasiewicz inequalities: subgradient flows,
  talweg, convexity}.
\newblock {\em Transactions of the American Mathematical Society},
  362(6):3319--3363, 2010.

\bibitem{bolte2020conservative}
J.~Bolte and E.~Pauwels.
\newblock Conservative set valued fields, automatic differentiation, stochastic
  gradient methods and deep learning.
\newblock {\em Mathematical Programming}, pages 1--33, 2020.

\bibitem{bolte2014proximal}
J.~Bolte, S.~Sabach, and M.~Teboulle.
\newblock Proximal alternating linearized minimization for nonconvex and
  nonsmooth problems.
\newblock {\em Mathematical Programming}, 146(1):459--494, 2014.

\bibitem{bolte2018first}
J.~Bolte, S.~Sabach, M.~Teboulle, and Y.~Vaisbourd.
\newblock {First order methods beyond convexity and Lipschitz gradient
  continuity with applications to quadratic inverse problems}.
\newblock {\em SIAM Journal on Optimization}, 28(3):2131--2151, 2018.

\bibitem{brezis1973ope}
H.~Brezis.
\newblock {\em Op\'erateurs maximaux monotones et semi-groupes de contractions
  dans les espaces de Hilbert}.
\newblock Elsevier, 1973.

\bibitem{bruck1975asymptotic}
R.~E. Bruck~Jr.
\newblock {Asymptotic convergence of nonlinear contraction semigroups in
  Hilbert space}.
\newblock {\em Journal of Functional Analysis}, 18(1):15--26, 1975.

\bibitem{candes2005decoding}
E.~J. Candes and T.~Tao.
\newblock Decoding by linear programming.
\newblock {\em IEEE transactions on information theory}, 51(12):4203--4215,
  2005.

\bibitem{cauchy1847methode}
A.~Cauchy.
\newblock M{\'e}thode g{\'e}n{\'e}rale pour la r{\'e}solution des systemes
  d’{\'e}quations simultan{\'e}es.
\newblock {\em Comp. Rend. Sci. Paris}, 25(1847):536--538, 1847.

\bibitem{chitour2022geometric}
Y.~Chitour, Z.~Liao, and R.~Couillet.
\newblock A geometric approach of gradient descent algorithms in linear neural
  networks.
\newblock {\em Mathematical Control and Related Fields}, 2022.

\bibitem{clarke1990}
F.~H. Clarke.
\newblock {\em Optimization and nonsmooth analysis}.
\newblock SIAM Classics in Applied Mathematics, 1990.

\bibitem{coddington1955theory}
E.~A. Coddington and N.~Levinson.
\newblock {\em Theory of ordinary differential equations}.
\newblock Tata McGraw-Hill Education, 1955.

\bibitem{curry1944method}
H.~B. Curry.
\newblock The method of steepest descent for non-linear minimization problems.
\newblock {\em Quarterly of Applied Mathematics}, 2(3):258--261, 1944.

\bibitem{daniilidis2015rectifiability}
A.~Daniilidis, G.~David, E.~Durand-Cartagena, and A.~Lemenant.
\newblock {Rectifiability of self-contracted curves in the Euclidean space and
  applications}.
\newblock {\em The Journal of Geometric Analysis}, 25(2):1211--1239, 2015.

\bibitem{davis2020stochastic}
D.~Davis, D.~Drusvyatskiy, S.~Kakade, and J.~D. Lee.
\newblock Stochastic subgradient method converges on tame functions.
\newblock {\em Foundations of computational mathematics}, 20(1):119--154, 2020.

\bibitem{drusvyatskiy2015quadratic}
D.~Drusvyatskiy and A.~D. Ioffe.
\newblock Quadratic growth and critical point stability of semi-algebraic
  functions.
\newblock {\em Mathematical Programming}, 153(2):635--653, 2015.

\bibitem{drusvyatskiy2015curves}
D.~Drusvyatskiy, A.~D. Ioffe, and A.~S. Lewis.
\newblock Curves of descent.
\newblock {\em SIAM Journal on Control and Optimization}, 53(1):114--138, 2015.

\bibitem{du2018algorithmic}
S.~S. Du, W.~Hu, and J.~D. Lee.
\newblock Algorithmic regularization in learning deep homogeneous models:
  Layers are automatically balanced.
\newblock {\em NeurIPS}, 2018.

\bibitem{du2019gradient}
S.~S. Du, X.~Zhai, B.~Poczos, and A.~Singh.
\newblock Gradient descent provably optimizes over-parameterized neural
  networks.
\newblock {\em ICLR}, 2019.

\bibitem{d2021bounding}
D.~D’Acunto and K.~Kurdyka.
\newblock Bounding the length of gradient trajectories.
\newblock In {\em Annales Polonici Mathematici}, volume 127, pages 13--50.
  Instytut Matematyczny Polskiej Akademii Nauk, 2021.

\bibitem{forsythe1951asymptoticproperties}
G.~E. Forsythe and T.~S. Motzkin.
\newblock Asymptotic properties of the optimum gradient method.
\newblock {\em Bulletin of the American Mathematical Society}, 57:183, 1951.

\bibitem{gabrielov1968projections}
A.~M. Gabrielov.
\newblock Projections of semi-analytic sets.
\newblock {\em Functional Analysis and its applications}, 2(4):282--291, 1968.

\bibitem{garrigos2015descent}
G.~Garrigos.
\newblock {\em Descent dynamical systems and algorithms for tame optimization,
  and multi-objective problems}.
\newblock PhD thesis, Universit{\'e} Montpellier; Universidad t{\'e}cnica
  Federico Santa Mar{\'\i}a (Valparaiso), 2015.

\bibitem{ge2015escaping}
R.~Ge, F.~Huang, C.~Jin, and Y.~Yuan.
\newblock Escaping from saddle points -- online stochastic gradient for tensor
  decomposition.
\newblock In {\em Conference on Learning Theory}, pages 797--842. PMLR, 2015.

\bibitem{gilbert2021fragments}
J.~C. Gilbert.
\newblock Fragments d'optimisation diff{\'e}rentiable -- th{\'e}ories et
  algorithmes.
\newblock 2021.

\bibitem{goldstein1965steepest}
A.~A. Goldstein.
\newblock On steepest descent.
\newblock {\em Journal of the Society for Industrial and Applied Mathematics,
  Series A: Control}, 3(1):147--151, 1965.

\bibitem{gupta2021path}
C.~Gupta, S.~Balakrishnan, and A.~Ramdas.
\newblock Path length bounds for gradient descent and flow.
\newblock {\em Journal of Machine Learning Research}, 22(68):1--63, 2021.

\bibitem{jensen1906fonctions}
J.~L. W.~V. Jensen.
\newblock Sur les fonctions convexes et les in{\'e}galit{\'e}s entre les
  valeurs moyennes.
\newblock {\em Acta mathematica}, 30(1):175--193, 1906.

\bibitem{joszli2022}
C.~Josz and X.~Li.
\newblock Certifying the absence of spurious local minima at infinity.
\newblock {\em Submitted}, 2022.

\bibitem{kantorovich1948functional}
L.~V. Kantorovich.
\newblock Functional analysis and applied mathematics.
\newblock {\em Uspekhi Matematicheskikh Nauk}, 3(6):89--185, 1948.

\bibitem{kawaguchi2016}
K.~Kawaguchi.
\newblock Deep learning without poor local minima.
\newblock {\em NeurIPS}, 2016.

\bibitem{kurdyka1998gradients}
K.~Kurdyka.
\newblock On gradients of functions definable in o-minimal structures.
\newblock In {\em Annales de l'institut Fourier}, volume~48, pages 769--783,
  1998.

\bibitem{kurdyka2000proof}
K.~Kurdyka, T.~Mostowski, and A.~Parusinski.
\newblock {Proof of the gradient conjecture of R. Thom}.
\newblock {\em Annals of Mathematics}, pages 763--792, 2000.

\bibitem{kurdyka43quasi}
K.~Kurdyka and A.~Parusiski.
\newblock {Quasi-convex decomposition in o-minimal structures. Application to
  the gradient conjecture. Singularity theory and its applications, 137177}.
\newblock {\em Adv. Stud. Pure Math}, 43, 2006.

\bibitem{lang2012algebra}
S.~Lang.
\newblock {\em Algebra}, volume 211.
\newblock Springer Science \& Business Media, 2012.

\bibitem{lee2019first}
C.-P. Lee and S.~Wright.
\newblock First-order algorithms converge faster than $ o (1/k) $ on convex
  problems.
\newblock In {\em ICML}, pages 3754--3762. PMLR, 2019.

\bibitem{lee2019}
J.~D. Lee, I.~Panageas, G.~Piliouras, M.~Simchowitz, M.~I. Jordan, and
  B.~Recht.
\newblock First-order methods almost always avoid strict saddle points.
\newblock {\em Mathematical Programming}, pages 311--337, 2020.

\bibitem{lee2016}
J.~D. Lee, M.~Simchowitz, M.~I. Jordan, and B.~Recht.
\newblock {Gradient Descent Only Converges to Minimizers}.
\newblock {\em Conference on Learning Theory}, 2016.

\bibitem{lemarechal2012cauchy}
C.~Lemar{\'e}chal.
\newblock Cauchy and the gradient method.
\newblock {\em Doc Math Extra}, 251(254):10, 2012.

\bibitem{li2018calculus}
G.~Li and T.~K. Pong.
\newblock {Calculus of the exponent of Kurdyka--{\L}ojasiewicz inequality and
  its applications to linear convergence of first-order methods}.
\newblock {\em Foundations of computational mathematics}, 18(5):1199--1232,
  2018.

\bibitem{li2020global}
S.~Li, Q.~Li, Z.~Zhu, G.~Tang, and M.~B. Wakin.
\newblock The global geometry of centralized and distributed low-rank matrix
  recovery without regularization.
\newblock {\em IEEE Signal Processing Letters}, 27:1400--1404, 2020.

\bibitem{loi2016lojasiewicz}
T.~L. Loi.
\newblock {\L}ojasiewicz inequalities in o-minimal structures.
\newblock {\em Manuscripta Mathematica}, 150(1):59--72, 2016.

\bibitem{lojasiewicz1963propriete}
S.~\L{}ojasiewicz.
\newblock Une propri{\'e}t{\'e} topologique des sous-ensembles analytiques
  r{\'e}els.
\newblock {\em Les {\'E}quations aux D{\'e}riv{\'e}es Partielles}, 1963.

\bibitem{law1965ensembles}
S.~\L{}ojasiewicz.
\newblock Ensembles semi-analytiques.
\newblock {\em IHES notes}, 1965.

\bibitem{lojasiewicz1984trajectoires}
S.~\L{}ojasiewicz.
\newblock Sur les trajectoires du gradient d’une fonction analytique.
\newblock {\em Seminari di geometria 1982-1983}, pages 115--117, 1984.

\bibitem{manselli1991maximum}
P.~Manselli and C.~Pucci.
\newblock Maximum length of steepest descent curves for quasi-convex functions.
\newblock {\em Geometriae Dedicata}, 38(2):211--227, 1991.

\bibitem{miller1994expansions}
C.~Miller.
\newblock Expansions of the real field with power functions.
\newblock {\em Annals of Pure and Applied Logic}, 68(1):79--94, 1994.

\bibitem{morse1939behavior}
A.~P. Morse.
\newblock The behavior of a function on its critical set.
\newblock {\em Annals of Mathematics}, pages 62--70, 1939.

\bibitem{nesterov2012make}
Y.~Nesterov.
\newblock How to make the gradients small.
\newblock {\em Optima. Mathematical Optimization Society Newsletter},
  (88):10--11, 2012.

\bibitem{nesterov2018lectures}
Y.~Nesterov.
\newblock {\em Lectures on convex optimization}, volume 137.
\newblock Springer, 2018.

\bibitem{nielsen1997introduction}
O.~A. Nielsen.
\newblock {\em An introduction to integration and measure theory}, volume~17.
\newblock Wiley-Interscience, 1997.

\bibitem{nocedal2006numerical}
J.~Nocedal and S.~Wright.
\newblock {\em Numerical optimization}.
\newblock Springer Science \& Business Media, 2006.

\bibitem{panageas2017}
I.~Panageas and G.~Piliouras.
\newblock Gradient descent only converges to minimizers: Non-isolated critical
  points and invariant regions.
\newblock {\em ITCS}, 2017.

\bibitem{panageas2019}
I.~Panageas, G.~Piliouras, and X.~Wang.
\newblock First-order methods almost always avoid saddle points: The case of
  vanishing step-sizes.
\newblock {\em NeurIPS}, 2019.

\bibitem{patel2022gradient}
V.~Patel and A.~S. Berahas.
\newblock {Gradient descent in the absence of global Lipschitz continuity of
  the gradients: Convergence, divergence and limitations of its continuous
  approximation}.
\newblock {\em arXiv preprint arXiv:2210.02418}, 2022.

\bibitem{pemantle1990nonconvergence}
R.~Pemantle.
\newblock Nonconvergence to unstable points in urn models and stochastic
  approximations.
\newblock {\em The Annals of Probability}, 18(2):698--712, 1990.

\bibitem{pillay1986definable}
A.~Pillay and C.~Steinhorn.
\newblock {Definable sets in ordered structures. I}.
\newblock {\em Transactions of the American Mathematical Society},
  295(2):565--592, 1986.

\bibitem{polyak1963gradient}
B.~T. Polyak.
\newblock Gradient methods for the minimisation of functionals.
\newblock {\em USSR Computational Mathematics and Mathematical Physics},
  3(4):864--878, 1963.

\bibitem{polyak1987introduction}
B.~T. Polyak.
\newblock {Introduction to optimization. Optimization Software}.
\newblock {\em Inc., Publications Division, New York}, 1, 1987.

\bibitem{rolin2003quasianalytic}
J.-P. Rolin, P.~Speissegger, and A.~Wilkie.
\newblock {Quasianalytic Denjoy-Carleman classes and o-minimality}.
\newblock {\em Journal of the American Mathematical Society}, 16(4):751--777,
  2003.

\bibitem{rudin1964principles}
W.~Rudin et~al.
\newblock {\em Principles of mathematical analysis}, volume~3.
\newblock McGraw-hill New York, 1964.

\bibitem{sard1942measure}
A.~Sard.
\newblock The measure of the critical values of differentiable maps.
\newblock {\em Bulletin of the American Mathematical Society}, 48(12):883--890,
  1942.

\bibitem{seidenberg1954new}
A.~Seidenberg.
\newblock A new decision method for elementary algebra.
\newblock {\em Annals of Mathematics}, pages 365--374, 1954.

\bibitem{shub2013global}
M.~Shub.
\newblock {\em Global stability of dynamical systems}.
\newblock Springer Science \& Business Media, 2013.

\bibitem{tao2006analysis}
T.~Tao.
\newblock {\em Analysis II}, volume~38.
\newblock Texts and Readings in Mathematics, Hindustan Book Agency, 2006.

\bibitem{tarski1951decision}
A.~Tarski.
\newblock {A decision method for elementary algebra and geometry: Prepared for
  publication with the assistance of JCC McKinsey}.
\newblock 1951.

\bibitem{temple1939general}
G.~Temple.
\newblock The general theory of relaxation methods applied to linear systems.
\newblock {\em Proceedings of the Royal Society of London. Series A.
  Mathematical and Physical Sciences}, 169(939):476--500, 1939.

\bibitem{valavi2020revisiting}
H.~Valavi, S.~Liu, and P.~Ramadge.
\newblock Revisiting the landscape of matrix factorization.
\newblock In {\em International Conference on Artificial Intelligence and
  Statistics}, pages 1629--1638. PMLR, 2020.

\bibitem{valavi2020landscape}
H.~Valavi, S.~Liu, and P.~J. Ramadge.
\newblock The landscape of matrix factorization revisited.
\newblock {\em arXiv preprint arXiv:2002.12795}, 2020.

\bibitem{van1984remarks}
L.~van~den Dries.
\newblock {Remarks on Tarski's problem concerning ($\mathbb{R}$,+,$\cdot$,
  exp)}.
\newblock In {\em Studies in Logic and the Foundations of Mathematics}, volume
  112, pages 97--121. Elsevier, 1984.

\bibitem{van1986generalization}
L.~van~den Dries.
\newblock {A generalization of the Tarski-Seidenberg theorem, and some
  nondefinability results}.
\newblock {\em Bulletin of the AMS}, 1986.

\bibitem{van1998tame}
L.~van~den Dries.
\newblock {\em Tame topology and o-minimal structures}, volume 248.
\newblock Cambridge university press, 1998.

\bibitem{van1994elementary}
L.~van~den Dries, A.~Macintyre, and D.~Marker.
\newblock The elementary theory of restricted analytic fields with
  exponentiation.
\newblock {\em Annals of Mathematics}, 140(1):183--205, 1994.

\bibitem{van1996geometric}
L.~van~den Dries and C.~Miller.
\newblock Geometric categories and o-minimal structures.
\newblock {\em Duke Mathematical Journal}, 84(2):497--540, 1996.

\bibitem{van1998real}
L.~van~den Dries and P.~Speissegger.
\newblock The real field with convergent generalized power series.
\newblock {\em Transactions of the American Mathematical Society},
  350(11):4377--4421, 1998.

\bibitem{wilkie1996model}
A.~J. Wilkie.
\newblock {Model completeness results for expansions of the ordered field of
  real numbers by restricted Pfaffian functions and the exponential function}.
\newblock {\em Journal of the American Mathematical Society}, 9(4):1051--1094,
  1996.

\bibitem{wolfe1969convergence}
P.~Wolfe.
\newblock Convergence conditions for ascent methods.
\newblock {\em SIAM review}, 11(2):226--235, 1969.

\bibitem{ye2021global}
T.~Ye and S.~S. Du.
\newblock Global convergence of gradient descent for asymmetric low-rank matrix
  factorization.
\newblock {\em NeurIPS}, 34, 2018.

\bibitem{zoutendijk1970nonlinear}
G.~Zoutendijk.
\newblock Nonlinear programming, computational methods.
\newblock {\em Integer and nonlinear programming}, pages 37--86, 1970.

\end{thebibliography}

\end{document}